\title[GIT stability and biquotients of $SU(3)$]
{GIT stability and biquotients of $SU(3)$}
\author{Yoshinori Hashimoto \\ Hiroaki Ishida \\ Hisashi Kasuya}
\address[Yoshinori Hashimoto]{Department of Mathematics, Graduate School of Science, Osaka Metropolitan University}
\email{yhashimoto@omu.ac.jp}
\address[Hiroaki Ishida]{Department of Mathematics, Graduate School of Science, Osaka Metropolitan University}
\email{hiroaki.ishida@omu.ac.jp}
\address[Hisashi Kasuya]{Graduate School of Mathematics,
Nagoya University}
\email{kasuya@math.nagoya-u.ac.jp}
\date{\today}
\subjclass{32M05, 22E46, 14L24}
\newif\ifdebug
\newtheorem{prop}{Proposition}[section]
\newtheorem*{prop*}{Proposition \referenza}
\newtheorem{theo}[prop]{Theorem}
\newtheorem*{theo*}{Theorem \referenza}
\newtheorem{coro}[prop]{Corollary}
\newtheorem*{coro*}{Corollary}
\newtheorem{lemm}[prop]{Lemma}
\newtheorem*{cla}{Claim} 
\newtheorem*{maintheo}{Main Theorem}
\theoremstyle{remark}
\newtheorem{rema}[prop]{Remark}
\theoremstyle{definition}
\newtheorem{defi}[prop]{Definition}
\newcommand{\C}{\mathbb{C}}
\newcommand{\R}{\mathbb{R}}
\newcommand{\N}{\mathbb{N}}
\newcommand{\Z}{\mathbb{Z}}
\newcommand{\diag}{\mathrm{diag}}
\newcommand{\cone}{\mathrm{cone}}
\newcommand{\conv}{\mathrm{conv}}
\newcommand{\Proj}{\mathrm{Proj}}
\newcommand{\chiss}{\chi\mathchar`-ss}
\newcommand{\chis}{\chi\mathchar`-s}
\DeclareMathOperator{\Int}{\mathrm{Int}}
\newcommand{\git}{\mathbin{
  \mathchoice{/\mkern-6mu/}
    {/\mkern-6mu/}
    {/\mkern-5mu/}
    {/\mkern-5mu/}}}
\def\co{\colon\thinspace}
\begin{document} 

\begin{abstract}
We study double-sided actions of $(\C^*)^2$ on $SL(3,\C)/U$ and the associated quotients, where $U$ is a maximal unipotent subgroup of $SL(3,\C)$. The main results of this paper are a sufficient condition for the double-sided quotient to   coincide with the GIT quotient, and an explicit necessary and sufficient condition for $SL(3,\C)/U$ to coincide with the $\chi$-stable locus in its affine closure. We apply this result to characterize certain complex structures on $SU(3)$ which are not left invariant by means of the GIT quotient. 
\end{abstract}

\maketitle

\section{Introduction}
We begin by recalling the definition of  a flag variety. A flag in $\C^n$ is a sequence 
\begin{equation*}
	0 = V_0 \subset V_1 \subset \dots \subset V_{n-1} \subset V_n = \C^n
\end{equation*}
of vector subspaces of $\C^n$ with $\dim V_k = k$ for all $k =0, 1,\dots, n$. 
The flag variety $\mathrm{Flag}(\C^n)$ consists of all flags in $\C^n$. The natural action of the special linear group $SL(n,\C)$ on $\C^n$ induces a transitive action on $\mathrm{Flag}(\C^n)$. The isotropy subgroup at the standard flag 
\begin{equation*}
	0 \subset \langle e_1\rangle \subset \langle e_1,e_2\rangle \subset \dots \subset \langle e_1,\dots, e_{n-1}\rangle \subset \langle e_1,\dots, e_n \rangle = \C^n 
\end{equation*}
is the subgroup $B$ consisting of all upper triangular matrices in $SL(n,\C)$. The subgroup $B$ is a Borel subgroup of $SL(n,\C)$, and $\mathrm{Flag}(\C^n)$ has a structure of the homogeneous space $SL(n,\C)/B$. By choosing a suitable character $B \to \C^*$, we obtain an ample line bundle $SL(n,\C) \times_B \C$, where the action of $B$ on $\C$ is given by the chosen character. 
Let $U$ be the commutator subgroup of $B$. Let $H$ be the algebraic torus consisting of all diagonal matrices in $SL(n,\C)$. Then, $B$ can be decomposed into the semi-direct product as $B = H \ltimes U$.  Therefore, $\mathrm{Flag} (\C^n)$ may be regarded as a quotient of $SL(n,\C)/U$ by an action of the algebraic torus $(\C^*)^{n-1}$. 
On the other hand, the algebraic torus $H \times H \cong (\C^*)^{2n-2}$ acts on $SL(n,\C)/U$ by ``double-sided'' multiplications. For $(g_L, g_R) \in H \times H$ and $[A] \in SL(n,\C)/U$, the action is given by $(g_L, g_R) \cdot [A] = [g_LAg_R^{-1}]$. In this paper, we consider quotients of $SL(n,\C)/U$ by actions of $H \times H$ restricted to an $(n-1)$-dimensional algebraic subtorus $(\C^*)^{n-1} \to H \times H$ and a linear character $\chi \co (\C^*)^{n-1} \to \C^*$. We seek conditions on $\chi$ under which the quotient is well behaved.

In this paper, we restrict our attention to the case $n = 3$, for which a more concrete description is available as follows. In this case, we can embed  $SL(3,\C)/U$ into $\C^3 \times \C^3$ as an orbit of the action of $SL(3,\C)$ on $\C^3 \times \C^3$ associated with the standard representation of $SL(3,\C)$ and its dual representation.
Under this embedding, 
$SL(3,\C)/U$ equipped with the action of a subtorus $(\C^*)^2 \to H \times H$ is equivariantly isomorphic (up to roots of unity) to the quasi-affine variety 
\begin{equation*}
	M = \left\{ (z,w) \in \C^3 \times \C^3 \mathrel{}\middle|\mathrel{} z \neq 0, w\neq 0, \sum_{i=1}^3z_iw_i = 0\right\}, 
\end{equation*}
where a double-sided action of the $2$-dimensional algebraic torus $(\C^*)^2$ on $M$ is given by
\begin{equation*}
	g \cdot (z,w) = (g^{A_1}z_1, g^{A_2}z_2, g^{A_3}z_3, g^{B_1}w_1, g^{B_2}w_2, g^{B_3}w_3), \quad g \in (\C^*)^2, (z,w) \in M
\end{equation*}
for certain weights $A_1,\dots, B_3 \in \Z^2$ (see Section \ref{sec:torusaction} and \cite[Section 3]{IK2025} for the details). The main result of this paper is the following, which gives a necessary and sufficient condition on $\chi$ so that the quotient by the $(\C^*)^2$-action agrees with the one in terms of the geometric invariant theory (GIT).

\begin{maintheo}[Theorem \ref{theo:maintheo}]
	Suppose that we have a double-sided action of $(\C^*)^2$ on $M$ whose weight is given by $A_1, A_2, A_3, B_1, B_2, B_3 \in \Z^2$, and that we choose a nontrivial linear character $\chi \co (\C^*)^2 \to \C^*$ whose weight is given by $\chi (g) = g^C$ for $C \in \Z^2$. Let $\overline{M}$ be the $(\C^*)^2$-equivariant affine closure of $M$, and $\overline{M}^{\chis}$ the $\chi$-stable locus in $\overline{M}$. Then, $\chi$ satisfies $M = \overline{M}^{\chis}$ if and only if it satisfies the following  ``Japanese fan'' condition: 
	\begin{itemize}
		\item[($\star$)] $\cone (A_1,A_2,A_3,B_1,B_2,B_3)$ has an apex at the origin and $C \in \Int \cone (A_i,B_j)$ for all  $i,j$ with $i \neq j$.  
	\end{itemize}
	Moreover, when the condition ($\star$) is satisfied, the quotient topological space $M / (\C^*)^2$ is a complex analytic space isomorphic  to the GIT quotient $\overline{M} \git_\chi (\C^*)^2$, which is a projective variety.
\end{maintheo}

\begin{figure}[h]
\begin{minipage}[b]{0.49\columnwidth}
\centering
\begin{tikzpicture}[scale=2.5, >=stealth]

  \coordinate (O) at (0,-0.5);

  \coordinate (A1) at (-0.8,0.6);
  \coordinate (A2) at (-0.4,0.7);
  \coordinate (A3) at (-1,0.4);

  \coordinate (B1) at (0.8,0.6);
  \coordinate (B2) at (0.4,0.7);
  \coordinate (B3) at (1,0.4);

  \coordinate (C) at (0, 0.3);


  \foreach \pt/\name in {A1/A_1, A2/A_2, A3/A_3, B1/B_1, B2/B_2, B3/B_3} {
    \draw[->, thick] (O) -- (\pt) node[pos=1.1] {$\name$};
  }

  \filldraw[black] (C) circle (1pt) node[above right] {$C$};

  \filldraw[black] (O) circle (1pt);
  \node[below left] at (O) {$0$};
  
\end{tikzpicture}
\caption{Illustration of the condition ($\star$).}
\label{fig:jpnsfan}
\end{minipage}
\begin{minipage}[b]{0.49\columnwidth}
\centering
\includegraphics[width=5cm]{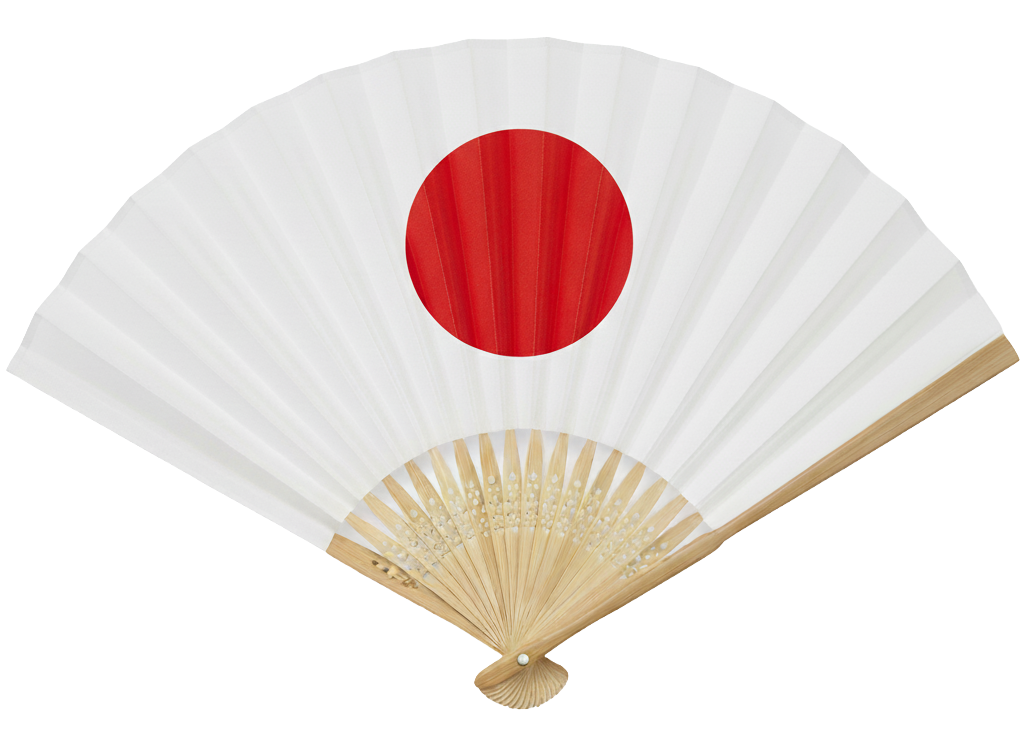}
\caption{Japanese traditional fan \textit{`sensu'}}
\end{minipage}
\end{figure}

All terminologies in the Main Theorem are explained in Section 2. When we only consider the natural right action of $H$ on $SL(n,\C)/U$, it is a standard fact that its quotient by $H$ is $\mathrm{Flag}(\C^3)$, which can be likewise constructed as a GIT quotient. The theorem above generalizes this result to double-sided actions, since it is easy to check that the right action satisfies the condition ($\star$). The choice of $\chi$ in the above can be interpreted as a choice of an ample line bundle (polarization) in the classical GIT.

As an application of the Main Theorem, we give a GIT characterization for the existence of complex structures on $SU(3)$  that is not left-invariant, as constructed by the last two authors \cite{IK2025}. Let $T$ be the diagonal torus in $SU(3)$ and  $\rho_L, \rho_R \co (S^1)^2 \to T$ smooth homomorphisms given by \begin{equation*}
		\begin{split}
			\rho_L(t) &= \diag (t^{w_1^L}, t^{w_2^L},t^{w_3^L}) \\
			\rho_R(t) &= \diag (t^{w_1^R}, t^{w_2^R},t^{w_3^R})
		\end{split}
	\end{equation*}
	for $t \in (S^1)^2$, where $w_j^L, w_j^R \in \Z^2$. Put 
	\begin{equation*}
		A_j := w_j^L-w_1^R, \quad B_j := -w_j^L + w_3^R, \quad C := -w_1^R + w_3^R.
	\end{equation*}
According to \cite{IK2025}, if the condition $(\star)$ is fulfilled, then we can construct a $T \times T$-invariant complex structure on $SU(3)$ obtained as follows: we construct a moment map $\Phi \co M \to \R^2$ with respect to the $(S^1)^2$-action, and have a regular level set $\Phi ^{-1}(C) \subset M$ which is equivariantly diffeomorphic to $SU(3)$. On the other hand, we can find a holomorphic foliation on a neighborhood of $\Phi ^{-1}(C)$ which is transverse to $\Phi ^{-1}(C)$. By Haefliger's trick \cite{Haefliger1985}, we obtain a complex structure on $SU(3)$ via a diffeomorphism to $\Phi ^{-1}(C)$. It is not obvious that the complex structure on $SU(3)$ is a quotient of the whole space $M$ by an action of $\C$. 
 The quotient $SU(3)/(\rho_L,\rho_R)((S^1)^2)$ also has a $T \times T$-invariant K\"ahler orbifold structure 
 such that the natural projection $\pi \co SU(3) \to SU(3)/(\rho_L,\rho_R)((S^1)^2)$ is holomorphic. It is natural to ask whether $SU(3)/(\rho_L,\rho_R)((S^1)^2)$ equipped with the K\"ahler orbifold structure is projective or not. 
 \begin{coro*}[Corollary \ref{coro}] 
	Assume that $A_j, B_j$ for $j=1,2,3$ and $C$ satisfy the condition $(\star)$. Then, the following hold: 
	\begin{enumerate} 
		\item $SU(3)$ equipped with the above complex structure is biholomorphic to the quotient of $M$ by a free action of $\C$. 
		\item $SU(3)/(\rho_L,\rho_R)((S^1)^2)$ is isomorphic to $\overline{M} \git_\chi (\C^*)^2$ as analytic spaces. In particular, $SU(3)/(\rho_L,\rho_R)((S^1)^2)$ has a structure of a projective variety. 
	\end{enumerate}
\end{coro*}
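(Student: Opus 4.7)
The plan is to combine the Main Theorem with a Kempf--Ness-type argument, based on the moment-map/Haefliger construction of \cite{IK2025} recalled above. Both parts concern quotients of $M$: part~(1) by a holomorphic $\C$-action transverse to $\Phi^{-1}(C)$, and part~(2) by the full $(\C^*)^2$-action, realised on the symplectic side as $\Phi^{-1}(C)/(S^1)^2 \cong SU(3)/(\rho_L,\rho_R)((S^1)^2)$ and on the algebraic side as $\overline{M} \git_\chi (\C^*)^2$.

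For part~(1), first realise the foliation of \cite{IK2025} as a holomorphic $\C$-action: choose a complex one-parameter subgroup $\iota \co \C \hookrightarrow (\C^*)^2$ whose Lie algebra is a complex line in $\Lie((\C^*)^2) \cong \C^2$ complementary, as a real subspace, to $\Lie((S^1)^2)$. Its orbits are then transverse to $\Phi^{-1}(C)$ and agree with the leaves of the foliation in \cite{IK2025}. Freeness of this $\C$-action on all of $M$ follows from condition $(\star)$ via the Main Theorem, since every point of $M$ is $\chi$-stable and hence lies on a closed, free $(\C^*)^2$-orbit. A moment-map argument then shows that each $\C$-orbit meets $\Phi^{-1}(C)$ in exactly one point (the ``non-compact'' direction shifts $\Phi$ bijectively along $\R^2$), so the inclusion induces a diffeomorphism $M/\C \to \Phi^{-1}(C) \cong SU(3)$; the complex structure inherited from the holomorphic quotient $M/\C$ agrees, by construction, with the Haefliger complex structure on $SU(3)$.

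For part~(2), consider the $(\rho_L,\rho_R)((S^1)^2)$-invariant composition
\begin{equation*}
	SU(3) \xrightarrow{\sim} \Phi^{-1}(C) \hookrightarrow M \longrightarrow \overline{M}\git_\chi (\C^*)^2.
\end{equation*}
The induced map from $SU(3)/(\rho_L,\rho_R)((S^1)^2)$ is bijective: injectivity, because two points of $\Phi^{-1}(C)$ in the same $(\C^*)^2$-orbit must differ by an element of $(S^1)^2$ (otherwise $\Phi$ would be shifted by the non-compact direction); surjectivity, because $\chi$-stability from the Main Theorem implies that every $(\C^*)^2$-orbit meets $\Phi^{-1}(C)$. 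To promote this bijection to an isomorphism of analytic spaces, I would invoke the standard Kempf--Ness identification of the K\"ahler reduction of $M$ at level $C$ with the GIT quotient $\overline{M}\git_\chi (\C^*)^2$, matching the K\"ahler orbifold structure on $\Phi^{-1}(C)/(S^1)^2$ from \cite{IK2025} with the analytic structure on the GIT side (first over the open dense smooth locus, then extending by normality and properness). Projectivity is then inherited directly from the Main Theorem.

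The main obstacle is the rigorous identification in part~(1) of the locally defined Haefliger complex structure with the global complex structure on $M/\C$: one must check that the foliation of \cite{IK2025}, a priori constructed only on a neighbourhood of $\Phi^{-1}(C)$, extends to a globally free $\C$-action on the whole of $M$, and that the resulting diffeomorphism $M/\C \to \Phi^{-1}(C)$ is holomorphic rather than merely smooth. The subsequent weight-data bookkeeping matching $(\rho_L,\rho_R)((S^1)^2)$ with $(S^1)^2 \subset (\C^*)^2$, and the Kempf--Ness comparison of orbifold structures, should then be a routine check once the identifications in Section~\ref{sec:torusaction} are invoked.
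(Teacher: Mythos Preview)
Your strategy is broadly the same as the paper's: realise the Haefliger foliation as the restriction of the $(\C^*)^2$-action to a complex one-parameter subgroup $\C \hookrightarrow (\C^*)^2$ (the paper uses the explicit map $u \mapsto (e^u, e^{\sqrt{-1}u})$, which matches the vector fields $Z,W$ in the construction), and then derive part~(2) from part~(1) together with the Main Theorem. The execution differs in two places. For the bijection $\Phi^{-1}(C) \to M/\C$, you propose a direct Kempf--Ness argument (each $\C$-orbit meets the level set exactly once, by convexity along the non-compact direction); the paper argues indirectly: the Main Theorem gives compactness of $M/(\C^*)^2$, and since $(\C^*)^2$ is the internal direct product of $(S^1)^2$ and the image of $\C$, the intermediate quotient $M/\C$ is already compact, which together with transversality forces the inclusion $\Phi^{-1}(C) \hookrightarrow M$ to induce an equivariant diffeomorphism onto $M/\C$. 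For part~(2), you re-run a Kempf--Ness comparison and then appeal to normality and properness to promote a bijection to an isomorphism of analytic spaces; the paper is more economical here, simply quotienting the equivariant biholomorphism $\Phi^{-1}(C) \cong M/\C$ from part~(1) by the residual $(S^1)^2$-action and then invoking the Main Theorem directly for the identification $M/(\C^*)^2 \cong \overline{M}\git_\chi (\C^*)^2$. Your route works but does more than necessary.

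One slip worth flagging: $\chi$-stability gives \emph{finite} isotropy for the $(\C^*)^2$-action, not a free orbit as you write. Your conclusion that the $\C$-action is free is nonetheless correct, since $\C$ is torsion-free and so meets any finite isotropy subgroup trivially; but the paper deduces this differently, via properness of the $(\C^*)^2$-action on $M = \overline{M}^{\chis}$ (from \cite{HMP1998}), which restricts to a proper and hence free $\C$-action.
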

We remark that the action of $\C$ on $M$ is not algebraic but holomorphic, and $SU(3)$ does not have a K\"ahler structure for topological reasons. We will explain the details in Section 4. The above complex structure on $SU(3)$ is a Lie group analogue of the manifolds constructed in \cite{LV1997}. These manifolds are generalized to the so-called LVM manifolds, which are constructed in \cite{Meersseman2000}. The holomorphic map $\pi \co SU(3) \to SU(3)/(\rho_L,\rho_R)((S^1)^2)$ is a Lie group analogue of the holomorphic Seifert fibering over toric varieties, shown in \cite{MV2004}. When we consider the natural right action of $T$ on $SU(3)$, the complex structure on $SU(3)$ is left-invariant. If $\rho_L$ is non-trivial, then the complex structure of $SU(3)$ in the Corollary is not left-invariant. Non-left-invariant complex structures on compact Lie groups are studied in \cite{IK2020} and \cite{LMN2007}.

The complex and symplectic structures on a biquotient of $SU(3)$ which differ from the objects discussed in this paper, are studied in \cite{GKZ2020}. Our construction is closely related to the GIT quotients that appear in the theory of Cox rings, as explained in Remark \ref{rmcxrg} (see \cite{AH2009,ADJ2013,ADHL} for more details). Other related works are \cite{Anisimov2012-1,Anisimov2012-2} that deal with GIT constructions of double coset varieties.

\bigskip

\noindent {\bf Organization.} 
Section 2 is devoted to the preliminaries for the latter sections. In Section 3, we characterize $\chi$-stable points in terms of cones in $\R^2$. In Section 4, we show the Main Theorem and give an application to complex structures on biquotients of $SU(3)$. In Appendix, we explain the universal property of quotients of complex manifolds by locally free proper actions of complex Lie groups. This result is likely well-known to the experts, but we give a detailed proof for the reader's convenience.

\bigskip 

\noindent{\bf Acknowledgements.} The authors would like to thank Ivan Arzhantsev for his valuable comments.

\section{Preliminaries}
\subsection{Rational polyhedral cones}
	Let $V = \{v_1,\dots, v_m\}$ be a finite subset of vectors in $\R^n$. A polyhedral cone generated by $V$ is the set of all linear combinations of elements in $V$ with nonnegative coefficients. Namely, 
	\begin{equation*}
		\cone (V) = \{ a_1v_k + \dots + a_mv_m \mid a _i \geq 0\}. 
	\end{equation*}
	If each $v_i$ is a rational vector, $\cone (V)$ is said to be rational polyhedral cone. We remark that if $C$ is a rational polyhedral cone $\cone (V)$ for some $V$, we may assume that each element in $V$ is integral, that is, $v_i \in \Z^n$. 
	
	If there exists a nonzero linear map $f \co \R^n \to \R$ such that 
	\begin{itemize}
		\item $H^+ := \{ v \in \R^n \mid f(v)\geq 0\} \supset \cone (V)$ and 
		\item $\cone (V) \cap \{ v \in \R^n \mid f(v) = 0\} = \{0\}$,
	\end{itemize}
	then we say that $\cone (V)$ has an apex  at the origin. The polyhedral cone $\cone (V)$ has an apex at the origin if and only if there exists a linear function $f \co \R^n \to \R$ such that $f(v_i)>0$ unless $v_i = 0$. If $\cone (V)$ is rational and has an apex at the origin, then we may choose $f$ above to be integral. 
	
	We need the following lemma for later use. 
	\begin{lemm}\label{lemm:Intcone}
		Let $A, B, C$ be vectors in $\R^2$ such that $C \in \cone (A,B)$ but $C \notin \cone (A) \cup \cone (B)$. Then $C \in \Int \cone (A,B)$. 
	\end{lemm}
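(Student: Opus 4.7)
The plan is straightforward case analysis on the coefficients expressing $C$ and on the linear dependence of $A$ and $B$.

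First, by the definition of $\cone(A,B)$, write $C = \alpha A + \beta B$ with $\alpha,\beta \ge 0$. I would argue that both coefficients must be strictly positive: if $\beta = 0$ then $C = \alpha A \in \cone(A)$, contradicting $C \notin \cone(A) \cup \cone(B)$, and similarly $\alpha = 0$ is impossible. (Note that the case $C = 0$ is excluded since $0 \in \cone(A)$.)

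Next I would show that $A$ and $B$ must be linearly independent. Suppose instead that $A = \lambda B$ for some $\lambda \in \R$. If $\lambda \ge 0$ then $\cone(A,B) = \cone(B)$, so $C \in \cone(B)$, contradiction. If $\lambda < 0$ then $C = \alpha A + \beta B = (\alpha\lambda + \beta) B$; depending on the sign of $\alpha\lambda + \beta$, this places $C$ in $\cone(B)$ or in $\cone(-B) = \cone(A)$, again contradicting the hypothesis.

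Having ruled out linear dependence, $(A,B)$ is a basis of $\R^2$, so the linear map $\R^2 \to \R^2$, $(s,t) \mapsto sA + tB$, is a homeomorphism. It carries the open set $\{s>0,\, t>0\}$ onto $\Int\cone(A,B)$. Since $C = \alpha A + \beta B$ with $\alpha,\beta>0$, we conclude $C \in \Int\cone(A,B)$.

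There is essentially no hard step; the only subtlety is remembering to rule out the degenerate linearly dependent configurations, where $\cone(A,B)$ would have empty interior and the hypothesis $C \notin \cone(A)\cup\cone(B)$ forces a contradiction instead.
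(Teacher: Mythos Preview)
Your proof is correct and follows the same three-step outline as the paper's: force $\alpha,\beta>0$, rule out linear dependence of $A$ and $B$, and then read off $C\in\Int\cone(A,B)$ from the basis description. Your linear-dependence argument is tidier than the paper's (which starts from a general relation $a'A+b'B=0$ and does a longer sign analysis); just note that writing ``$A=\lambda B$'' tacitly assumes $B\neq 0$, but the residual case $B=0$ is immediate since then $C=\alpha A\in\cone(A)$.
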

	\begin{proof}
		Suppose that $C = aA+ bB$ for some $a, b \geq 0$. If $a = 0$, then $C \in \cone (B)$ because $C = bB$. This contradicts the assumption $C \notin \cone (A) \cup \cone (B)$. By the same argument, we have $a>0$ and $b>0$.  
		
		We claim that $A$ and $B$ are linearly independent. Suppose $a'A + b'B = 0$ for some $a',b' \in \R$ with $(a',b') \neq (0,0)$. If $(a',b')$ is a multiple of $(a,b)$, then $C=0 \in \cone (A) \cup \cone (B)$, contradiction. Thus $(a',b')$ is not a multiple of $(a,b)$. Thus we have 
		\begin{equation*}
			A = \frac{b'}{ab'-a'b}C, \quad B = \frac{-a'}{ab'-a'b}C. 
		\end{equation*}
		Since $C \notin \cone (A) \cup \cone (B)$, we have $\frac{b'}{ab'-a'b}<0$ and $\frac{-a'}{ab'-a'b}<0$. 
		In particular, $a'$ and $b'$ have different signs. 
		By eliminating $A$ ($B$, respectively) from $C = aA+bB$ and $0=a'A+b'B$ in the case when $a'<0$ ($b'<0$, respectively), we have $C \in \cone (B)$ ($\cone (A)$, respectively), contradiction. 
		Therefore $A$ and $B$ are linearly independent. 
		
		Since $A$ and $B$ form a basis of $\R^2$, we have 
		\begin{equation*}
			\Int \cone(A,B) = \{ \alpha A + \beta B \mid \alpha, \beta >0\}. 
		\end{equation*}
		Therefore $C \in \Int \cone(A,B)$, proving the lemma.
	\end{proof}
	
\subsection{The quasi-affine variety $M$ and its closure $\overline{M}$}
Let $M$ be the quasi-affine variety given by
\begin{equation*}
	M = \left\{ (z,w)= (z_1,z_2,z_3,w_1,w_2,w_3) \in \C^3 \times \C^3 \mathrel{}\middle|\mathrel{} z \neq 0, w \neq 0, \sum_{i=1}^3 z_iw_i=0\right\}. 
\end{equation*}
Let $\overline{M}$ be the closure of $M$ in $\C^6$. Namely, $\overline{M}$ is the affine variety 
\begin{equation*}
	\overline{M} = \left\{(z,w)= (z_1,z_2,z_3,w_1,w_2,w_3) \in \C^3 \times \C^3 \mathrel{}\middle|\mathrel{} \sum_{i=1}^3 z_iw_i=0\right\}
\end{equation*}
embedded in $\C^3 \times \C^3$.

\begin{rema} \label{rmcxrg}
	The manifolds $M$ and $\overline{M}$ are closely related to the Cox ring of the flag variety \cite{ADHL}, as follows. Let $B$ be the Borel subgroup of $SL(3, \mathbb{C})$ (which is the complexification of $SU (3)$), and let $X := SL(3,\mathbb{C}) / B$ be the associated flag variety.
	
	The Cox sheaf is defined as
	\begin{equation*}
		\mathcal{R}:= \bigoplus_{[D] \in \mathrm{Cl} (X)} \mathcal{O}_X(D),
	\end{equation*}
	and the Cox ring $\mathcal{R} (X)$ is the algebra of global sections of $\mathcal{R}$ \cite[\S 1.4.1]{ADHL}. It is well-known that $\mathcal{R} (X)$ is finitely generated over $\mathbb{C}$ when $X$ is a flag variety. The characteristic space of the Cox sheaf is defined to be the relative spectrum $\mathrm{Spec}_X (\mathcal{R})$ over $X$, and the total coordinate space is defined to be the affine variety $\mathrm{Spec} ( \mathcal{R} (X) )$. It is well-known that the total coordinate space of $X = SL(3,\mathbb{C}) / B$ is given by $\overline{M}$, which in turn agrees with the affine cone over $X$ \cite[\S 3.2.3]{ADHL}. We also find that the characteristic space agrees with $M$, and that $X$ is obtained as the GIT quotient of the total coordinate space by the torus action, as explained in \cite[\S 1.6]{ADHL}.  
\end{rema}

\subsection{Torus actions and the coordinate rings}\label{sec:torusaction}

Let $A_1,A_2,A_3,B_1,B_2,B_3 \in \Z^2$  be such that $A_1 +B_1 = A_2+B_2 = A_3+B_3$. We equip an action of $(\C^*)^2$ on $\overline{M}$ as follows. For $g = (g_1,g_2) \in (\C^*)^2$ and $(z,w) \in \overline{M}$, 
\begin{equation}\label{eq:algtorusactiononM}
	g\cdot (z,w):= (g^{A_1}z_1, g^{A_2}z_2, g^{A_3}z_3, g^{B_1}w_1, g^{B_2}w_2, g^{B_3}w_3),
\end{equation}
here we use the multi-index notation. Namely, for $A = (a_1,a_2) \in \Z^2$ by $g^{A}$ we mean $g_1^{a_1}g_2^{a_2}$.

The coordinate ring $R$ of $\overline{M}$ is represented as 
\begin{equation*}
	R = \C [z_1,z_2,z_3,w_1,w_2,w_3]\left/\left(\sum_{i=1}^3z_iw_i\right)\right. .
\end{equation*}
The action of $(\C^*)^2$ on $\overline{M}$ induces a natural right action on $R$ as follows. For $g \in (\C^*)^2$, $f \in R$ and $(z,w) \in M$, 
\begin{equation*}
	f^g (z,w) := f(g\cdot (z,w)). 
\end{equation*}
In particular, for the generators $z_1,\dots, w_3 \in R$ we have $z_i^g = g^{A_i}z_i$ and $w_j^g = g^{B_j}w_j$ for $i,j=1,2,3$. We take a linear character $\chi \co (\C^*)^2 \to \C^*$ and define a graded subspace $R^\chi_n$ of $R$ by 
\begin{equation*}
	R_n^\chi := \{ f \in R \mid f^g = \chi(g)^n f\}
\end{equation*}
for $n \in \N \cup \{0\}$. We assume throughout that $\chi$ is nontrivial. The direct sum $R^\chi := \bigoplus _{n \in \N \cup\{0\}} R^\chi_n$ forms a graded algebra.

We note that the subspace $R_0^\chi$ is the set of $(\C^*)^2$-invariant elements and hence does not depend on the choice of $\chi$.

\begin{prop} \label{prop:dgzrc}
	$R_0^\chi = \C$ if and only if $A_i \neq 0$ and $B_j\neq 0$ for all $i,j$ and $\cone (A_1,A_2,A_3,B_1,B_2,B_3)$ has an apex at the origin. 
\end{prop}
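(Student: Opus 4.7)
The plan is to exploit the $\Z^2$-grading on $R$ induced by the $(\C^*)^2$-action. Since $A_i + B_i$ is independent of $i$, the defining relation $\sum z_i w_i$ is homogeneous for this grading, so the grading descends from $S := \C[z_1, z_2, z_3, w_1, w_2, w_3]$ to $R$, and $R_0^\chi$ is exactly the weight-$0$ piece, independent of $\chi$. Writing $I = (\sum z_i w_i)$, the fact that $I$ is a graded ideal gives $R_0^\chi = S_0 / I_0$, where $S_0$ is spanned by invariant monomials $z^a w^b$, i.e.\ those with $\sum_i a_i A_i + \sum_j b_j B_j = 0$.

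A useful preliminary observation is that no nonzero monomial in $S$ lies in $I$: since $\sum z_i w_i$ is irreducible in the UFD $S$, any element of $I$ is divisible by it, but no monomial is divisible by a polynomial with more than one term. Therefore every invariant monomial descends to a nonzero invariant element of $R$, and $R_0^\chi = \C$ holds if and only if the only invariant monomial in $S$ is the constant $1$.

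For the forward direction, if $A_i = 0$ for some $i$ then $z_i$ is a non-constant invariant monomial in $S$, hence a non-constant element of $R_0^\chi$; the case $B_j = 0$ is symmetric. If $\cone(A_1, \ldots, B_3)$ does not have $0$ as an apex, then (the zero generators having just been excluded) the cone contains a line through the origin. This yields a non-trivial non-negative \emph{real} relation among the generators, and since the generators lie in $\Z^2$, a standard rationality argument upgrades this to a nonzero tuple $(a_1, \ldots, b_3) \in \Z_{\geq 0}^6$ with $\sum_i a_i A_i + \sum_j b_j B_j = 0$, producing a non-constant invariant monomial.

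For the converse, the characterization of pointed cones recalled in the previous subsection yields a linear functional $f \colon \R^2 \to \R$ with $f(A_i), f(B_j) > 0$ for every $i, j$. If $\sum_i a_i A_i + \sum_j b_j B_j = 0$ with $a_i, b_j \geq 0$, applying $f$ forces all $a_i, b_j$ to be zero, so the only invariant monomial is $1$, and combined with the earlier observations this gives $R_0^\chi = \C$. The only delicate points I expect are the grading/quotient bookkeeping for $R_0^\chi = S_0/I_0$ and the rationality step in the forward direction; the remaining content is routine torus invariant theory.
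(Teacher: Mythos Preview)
Your argument is correct and follows essentially the same route as the paper: both reduce $R_0^\chi = \C$ to the nonexistence of a nontrivial relation $\sum k_i A_i + \sum l_j B_j = 0$ with $k_i, l_j \in \Z_{\ge 0}$, and then translate this into the apex condition; you are simply more explicit than the paper about the passage $R_0^\chi = S_0/I_0$, about why monomials survive the quotient, and about the rationality step. The one point to tighten is that ``nonzero in $R$'' is slightly weaker than ``non-constant in $R$'': to see that a non-constant invariant monomial $m$ is not a scalar in $R$, use the total-degree grading on $R$ (well-defined since $\sum z_i w_i$ is homogeneous of degree $2$), under which $m$ lands in strictly positive degree while $\C$ sits in degree $0$.
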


\begin{proof}
	By definition of $R^\chi_0$, it is generated by monomials of the form $z_1^{k_1}z_2^{k_2}z_3^{k_3}w_1^{l_1}w_2^{l_2}w_3^{l_3}$ such that 
	\begin{equation*}
		(z_1^{k_1}z_2^{k_2}z_3^{k_3}w_1^{l_1}w_2^{l_2}w_3^{l_3})^g = 1
	\end{equation*}
	for all $g \in (\C^*)^2$. On the other hand, the weight of $g$ is given by
	\begin{equation*}
		(z_1^{k_1}z_2^{k_2}z_3^{k_3}w_1^{l_1}w_2^{l_2}w_3^{l_3})^g = g^{\sum_{i=1}^3k_iA_i + \sum_{j=1}^3l_jB_j} z_1^{k_1}z_2^{k_2}z_3^{k_3}w_1^{l_1}w_2^{l_2}w_3^{l_3}. 
	\end{equation*}
	Since any non-constant element in $R^\chi_0$ is a $\mathbb{C}$-linear combination of monomials whose weights are (term-wise) trivial, we find that there exists a non-constant element in $R^\chi_0$ if and only if $0$ is a nontrivial conical combination of $A_1,A_2,A_3,B_1,B_2,B_3$. Thus $R_0^\chi = \C$ if and only if $\sum_{i=1}^3k_iA_i + \sum_{j=1}^3l_jB_j=0$ implies $k_i=l_i=0$ for $i=1,2,3$, which in turn happens if and only if $A_i \neq 0$ and $B_j\neq 0$ for all $i,j$ and $\cone (A_1,A_2,A_3,B_1,B_2,B_3)$ has an apex at the origin.
\end{proof}

\subsection{$\chi$-stability and Hilbert-Mumford criterion}\label{sec:HMC}
We define $\overline{M} \git_\chi (\C^*)^2$ to be the scheme $\Proj (R^\chi)$ projective over $\mathrm{Spec} (R_0^\chi )$, which is a projective scheme in the usual sense if $R_0^\chi = \C$ holds as in the above proposition. Recall the following definition \cite[Definition 2.1]{King94}. Let $(\C^*)^2$ act on $\overline{M} \times \C$ by 
	\begin{equation*}
		g\cdot ((z,w),v) := (g\cdot (z,w), \chi(g)^{-1}v)
	\end{equation*}
	for $(z,w) \in M$ and $v \in \C$. Let $v \neq 0$. 
	\begin{enumerate}
		\item $(z,w) \in \overline{M}$ is said to be $\chi$-semistable if there exists $f \in R^\chi_n$ for some $n \ge 1$ such that $f(z,w) \neq 0$.   We denote by $\overline{M}^{\chiss}$ the set of $\chi$-semistable points in $\overline{M}$.
		\item $(z,w) \in \overline{M}$ is said to be $\chi$-stable if there exists $f \in R^\chi_n$ for some $n \ge 1$ such that $f(z,w) \neq 0$, its isotropy subgroup is finite, and the $(\C^*)^2$-action on $\{ (z,w) \in \overline{M} \mid f(z,w) \neq 0 \}$ is closed. We denote by $\overline{M}^{\chis}$ the set of $\chi$-stable points in $\overline{M}$.
	\end{enumerate}

By \cite[Lemma 2.2]{King94}, we find that $(z,w) \in \overline{M}$ is $\chi$-stable if and only if the $(\C^*)^2$-orbit of $((z,w),v) \in \overline{M} \times \mathbb{C}$ is closed (in Zariski topology) and its isotropy group is finite. By \cite[Lemma 3.17]{Newstead78}, we further find that $(z,w) \in \overline{M}$ is $\chi$-stable if and only if the action morphism
\begin{equation} \label{eqactmrpph}
	\sigma : (\C^*)^2 \ni g \mapsto g \cdot ((z,w),v) \in \mathbb{C}^3 \times \mathbb{C}^3 \times \mathbb{C}
\end{equation}
is proper.

	Let $\alpha =(\alpha_1,\alpha_2) \in \Z^2$ and $\lambda \co \C^* \to (\C^*)^2$ the one parameter subgroup ($1$-PS for short) given by $\lambda(h) = (h^{\alpha_1}, h^{\alpha_2})$ for $h \in \C^*$. Let $(z,w) \in \overline{M}$. We put 
	\begin{equation} \label{dfeqhmwgt}
		\mu_\chi(\lambda, (z,w)) = -\min(\{\langle A_i,\alpha\rangle \mid z_i \neq 0\} \cup \{\langle B_j,\alpha\rangle \mid w_j \neq 0\} \cup \{-\langle C, \alpha\rangle\}). 
	\end{equation} 
	Then, we have the following criterion called the Hilbert--Mumford criterion: Let $(z,w) \in \overline{M}$. 
	\begin{enumerate}
		\item $(z,w) \in \overline{M}^{\text{$\chi$-ss}}$ if and only if $\mu_\chi (\lambda, (z,w)) \geq 0$ for all $1$-PS $\lambda$. 
		\item $(z,w) \in \overline{M}^{\chis}$ if and only if $\mu_\chi (\lambda, (z,w)) \geq 0$ for all $1$-PS $\lambda$ and equality holds if and only if the subgroup $\lambda(\C^*)$ is trivial. 
	\end{enumerate}
	A proof of the above can be found in a standard text book in Geometric Invariant Theory (see e.g.~\cite{King94}, \cite{MFK}, and \cite{Nakajima99}).

In general, for a complex reductive linear algebraic group $G$ acting on $\C^m$, we consider a 1-PS $\xi \co \C^* \to G$ acting with the weights
\begin{equation*}
	\xi (h) \cdot (x_1 , \dots , x_m) = (h^{\beta_1} x_1 , \dots , h^{\beta_m} x_m), \quad \beta_1 , \dots , \beta_m \in \Z.
\end{equation*}
We define the Hilbert--Mumford weight by
\begin{equation*}
	\mu (\xi , x) := - \min \{ \beta_i \mid 1 \le i \le m \text{ with } x_i \neq 0 \}.
\end{equation*}
It is well-known that $\mu (\xi , x) \ge 0$ for any 1-PS $\xi \co \C^* \to G$, with equality if and only if $\xi (\C^* )$ is trivial, if and only if the action morphism $\sigma \co G \ni g \mapsto g \cdot x \in \C^m$ is proper \cite[Propositions 4.7 and 4.8]{Newstead78}. We apply this result to our situation, in which we take $G = (\C^*)^2$ and $x = ((z,w),v)$. The weight of the 1-PS $\lambda \co \C^* \ni h \mapsto (h^{\alpha_1}, h^{\alpha_2}) \in (\C^*)^2$ is given by
\begin{equation*}
	\lambda (h) \cdot ((z,w),v) = (h^{\langle A_1 , \alpha \rangle} z_1, h^{\langle A_2 , \alpha \rangle} z_2 , h^{\langle A_3 , \alpha \rangle} z_3 , h^{\langle B_1 , \alpha \rangle} w_1 , h^{\langle B_2 , \alpha \rangle} w_2 , h^{\langle B_3 , \alpha \rangle} w_3 , h^{-\langle C , \alpha \rangle} v ),
\end{equation*}
where we wrote $\chi (\lambda (h)) = h^{\langle C , \alpha \rangle}$ for some $C \in \Z^2$. Thus, we find that $(z,w) \in \overline{M}^{\chis}$ holds if and only if $\mu_\chi (\lambda, (z,w)) \geq 0$ for all $1$-PS $\lambda$ with equality if and only if $\lambda (\C^*)$ is trivial, since $(z,w) \in \overline{M}^{\chis}$ holds if and only if the action morphism (\ref{eqactmrpph}) is proper.

\section{$\chi$-stability in terms of cones} 
\begin{defi}
	We write $\Sigma := \cone (A_1,A_2,A_3,B_1,B_2,B_3)$, and for any $(z,w) \in \overline{M}^{\text{$\chi$-ss}}$ we define the subcone of $\Sigma$ generated by $A_i$'s such that the variable $z_i$ with the corresponding index is nonzero, and similarly for $B_j$'s; namely
\begin{equation*}
	\sigma_{z,w}:= \cone \left( \{A_i \mid z_i\neq 0\}_{i=1}^3 \cup \{B_j \mid w_j\neq 0\}_{j=1}^3 \right).
\end{equation*}
\end{defi}

We give another criterion for $\chi$-semistability in terms of discrete geometry. Assume that $\mu_\chi(\lambda, (z,w)) \geq 0$ for all $1$-PS $\lambda$. Then for each $1$-PS $\lambda'$ given by $\lambda'(h) = (h^{\alpha_1'}, h^{\alpha_2'})$ with $\alpha' = (\alpha_1',\alpha_2') \in \Z^2$, the set 
\begin{equation*}
	\{\langle A_i,\alpha'\rangle \mid z_i \neq 0\}_{i=1}^3 \cup \{\langle B_j,\alpha'\rangle \mid w_j \neq 0\}_{j=1}^3 \cup \{-\langle C, \alpha'\rangle\}
\end{equation*}
of integers contains at least one non-positive integer. The converse is also true. By replacing $\lambda'$ by $(\lambda')^{-1}$, we also have that the set contains at least one non-negative integer. Remark that $\Sigma$ has the apex  at the origin. With this understood, we have the following:
\begin{lemm}\label{lemm:chiss}
	$(z,w) \in \overline{M}^{\chiss}$ if and only if $C \in \sigma_{z,w}$.
\end{lemm}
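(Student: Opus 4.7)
The plan is to reformulate the Hilbert--Mumford criterion recalled in Section \ref{sec:HMC} as a separating-hyperplane statement for the cone $\sigma_{z,w}$, and then close the argument via the bipolar theorem for closed convex cones in $\R^2$. Unpacking the definition of $\mu_\chi$, the point $(z,w)$ fails to be $\chi$-semistable precisely when, for some $\alpha \in \Z^2$, every element of the set appearing in (\ref{dfeqhmwgt}) is strictly positive; that is,
\begin{equation*}
	\langle A_i, \alpha\rangle > 0 \ \forall i \text{ with } z_i \neq 0, \qquad \langle B_j, \alpha\rangle > 0 \ \forall j \text{ with } w_j \neq 0, \qquad \langle C, \alpha\rangle < 0.
\end{equation*}
Equivalently, $(z,w) \notin \overline{M}^{\chiss}$ if and only if there exists $\alpha \in \Z^2 \cap \Int(\sigma_{z,w}^\vee)$ with $\langle C, \alpha\rangle < 0$, where $\sigma_{z,w}^\vee$ denotes the dual cone.

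The implication $C \in \sigma_{z,w} \Rightarrow (z,w) \in \overline{M}^{\chiss}$ is then immediate: expressing $C = \sum_{z_i \neq 0} k_i A_i + \sum_{w_j \neq 0} l_j B_j$ as a conical combination with $k_i, l_j \geq 0$, any $\alpha$ satisfying the first two inequalities above has $\langle C, \alpha\rangle \geq 0$ by linearity, ruling out the third.

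For the converse, I assume $C \notin \sigma_{z,w}$ and aim to produce such an $\alpha$. Since $\sigma_{z,w} \subset \Sigma$ inherits the apex $0$ property, it is a pointed rational polyhedral cone; hence $\sigma_{z,w}^\vee$ is full-dimensional, and its interior consists exactly of those $\alpha$ with $\langle v, \alpha\rangle > 0$ for every nonzero $v \in \sigma_{z,w}$. The bipolar theorem ($\sigma_{z,w}^{\vee\vee} = \sigma_{z,w}$) supplies some $\alpha_0 \in \sigma_{z,w}^\vee$ with $\langle C, \alpha_0\rangle < 0$. The only subtlety — the step I expect to require the most care — is promoting $\alpha_0$ into $\Int(\sigma_{z,w}^\vee)$ without losing the sign of $\langle C, \cdot\rangle$: fixing any $\beta \in \Int(\sigma_{z,w}^\vee)$, the perturbation $\alpha_\epsilon := \alpha_0 + \epsilon \beta$ lies in $\Int(\sigma_{z,w}^\vee)$ for every $\epsilon > 0$ (adding an interior point of a convex cone to any cone element stays in the interior) and satisfies $\langle C, \alpha_\epsilon\rangle < 0$ for small $\epsilon > 0$ by continuity. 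Rationality of $\sigma_{z,w}^\vee$ then allows $\alpha_\epsilon$ to be chosen in $\Q^2$ inside this open region, and clearing denominators yields the desired $\alpha \in \Z^2$.
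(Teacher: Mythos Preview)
Your argument is correct and rests on the same convex-separation principle as the paper's, but the packaging differs. The paper isolates an auxiliary result (Lemma~\ref{lemm:forchiss}) proved via the supporting hyperplane theorem applied to $\conv(-C, v_1, \ldots, v_m)$: the hypothesis that no linear form is strictly positive on all of $-C, v_1, \ldots, v_m$ forces $0$ into this convex hull, and the apex condition then guarantees the coefficient of $-C$ is nonzero, yielding $C \in \cone(v_1, \ldots, v_m)$. You instead work directly with the dual cone $\sigma_{z,w}^\vee$ and the bipolar identity $\sigma_{z,w}^{\vee\vee} = \sigma_{z,w}$, with your interior-perturbation step replacing the paper's passage from rational to real linear forms. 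Your route is a little more streamlined for this two-dimensional problem and is phrased in the standard toric dialect; the paper's Lemma~\ref{lemm:forchiss}, by contrast, is stated in arbitrary $\R^n$ and so is reusable in higher rank. One small point worth making explicit: your identification of the set $\{\alpha : \langle A_i, \alpha\rangle > 0,\ \langle B_j, \alpha\rangle > 0\}$ with $\Int(\sigma_{z,w}^\vee)$ requires every generator appearing in $\sigma_{z,w}$ to be nonzero---this is part of the standing nondegeneracy hypotheses but is not implied by the apex condition on $\Sigma$ alone.
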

This lemma follows from the following general result.

\begin{lemm}\label{lemm:forchiss}
	Suppose that we take $u, v_1,\dots, v_m \in \Z^n \setminus \{ 0 \}$ such that $\cone (v_1,\dots, v_m)$ has an apex  at the origin, and assume that for any $\mathbb{R}$-linear map $f \co \R^n \to \R$ with coefficients in $\mathbb{Q}$, at least one of $f(-u), f(v_1),\dots, f(v_m)$ is non-positive. Then, $u \in \cone (v_1,\dots, v_m)$. 
\end{lemm}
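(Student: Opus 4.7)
The plan is to argue by contraposition: assuming $u \notin \cone(v_1,\dots,v_m)$, I will exhibit a $\Q$-rational linear functional $f \co \R^n \to \R$ with $f(-u)>0$ and $f(v_i)>0$ for every $i$, thereby negating the hypothesis.

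To produce such an $f$, I would first invoke the separating hyperplane theorem (equivalently, a form of Farkas' lemma) for closed convex cones. Since $\cone(v_1,\dots,v_m)$ is a closed convex cone in $\R^n$ not containing $u$, there exists a real linear functional $f_0 \co \R^n \to \R$ with $f_0(u)<0$ and $f_0(v) \geq 0$ for every $v \in \cone(v_1,\dots,v_m)$; in particular $f_0(v_i) \geq 0$ for all $i$. Next, I would use the apex hypothesis to upgrade these weak inequalities into strict ones. By the characterization of apex $0$ recalled at the start of Section~2.1, the nonvanishing of every $v_i$ combined with the apex-$0$ property yields an integral linear functional $g \co \R^n \to \R$ with $g(v_i)>0$ for all $i$. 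For $\epsilon>0$ sufficiently small, the perturbation $f_0+\epsilon g$ satisfies $(f_0+\epsilon g)(u)<0$ and $(f_0+\epsilon g)(v_i)>0$ for every $i$. Finally, since strict inequalities are open conditions on the coefficients of a linear functional, density of $\Q^n$ in $\R^n$ allows us to replace $f_0+\epsilon g$ by a $\Q$-rational $f$ preserving these strict inequalities; rewriting $f(u)<0$ as $f(-u)>0$ then contradicts the hypothesis and completes the proof.

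The main subtlety I anticipate is the interplay between rationality and the separation theorem: the standard separation argument only produces a real functional $f_0$ with weak inequalities, and neither strictness nor rationality is automatic. The apex hypothesis is precisely what lets us upgrade to strict inequalities, after which rationality is cheaply recovered by density. Without the apex condition, some $v_i$ could lie on the boundary hyperplane $\{f_0 = 0\}$, and no small perturbation would simultaneously push every $v_i$ strictly to the positive side while maintaining $f(u)<0$.
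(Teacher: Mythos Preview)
Your contrapositive argument is correct and uses the same three ingredients as the paper---cone separation, the apex hypothesis, and density of $\Q$ in $\R$---but organizes them differently. The paper argues directly: it first passes from the $\Q$-rational hypothesis to the $\R$-linear one by density (non-positivity being a closed condition), then applies the supporting hyperplane theorem to deduce $0 \in \conv(-u, v_1, \dots, v_m)$, and finally invokes the apex condition to show that in the resulting convex combination $-au + \sum b_i v_i = 0$ the coefficient $a$ must be nonzero. Your route separates $u$ from the cone first, uses the apex functional $g$ to perturb to strict inequalities, and only then approximates by a rational functional (strict inequalities being open). Your version is arguably more streamlined: it avoids the paper's preliminary reduction to the case where $u, v_1, \dots, v_m$ span $\R^n$, and it sidesteps the two-stage ``$0$ is in the convex hull, then $a \neq 0$'' structure by building the strict separation in one stroke. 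The paper's version, on the other hand, makes the role of the convex hull more visible and perhaps better motivates the later remark that the apex condition cannot be dropped.
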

\begin{proof}
	We assume $u \neq 0$, since the claim is obvious otherwise. By replacing $\R^n$ by the $\mathbb{R}$-linear hull of $u, v_1,\dots, v_m$ if necessary, we may assume that $u, v_1,\dots, v_m$ span $\R^n$ over $\mathbb{R}$. We then find, by the supporting hyperplane theorem, that the convex hull $\conv (-u, v_1,\dots, v_m)$ contains $0$ if for any $\mathbb{R}$-linear map $f' \co \R^n \to \R$ at least one of $f'(-u), f'(v_1),\dots, f'(v_m)$ is non-positive. This condition is satisfied if for any $\mathbb{Q}$-linear map $f \co \mathbb{Q}^n \to \mathbb{Q}$ at least one of $f(-u), f(v_1),\dots, f(v_m)$ is non-positive, as stated in the hypothesis of the lemma, since $u, v_1,\dots, v_m \in \Z^n \setminus \{ 0 \}$ and being non-positive is a closed condition.
	
	Thus the hypothesis of the lemma implies $0 \in \conv (-u, v_1,\dots, v_m)$, namely
	\begin{equation*}
		-au + \sum_{i=1}^m b_iv_i =0, \text{ for some } a, b_1,\dots, b_m\geq 0 \text{ with } a + \sum_{i=1}^m b_i=1.
	\end{equation*}
	We get the claimed result by showing $a \neq 0$. We first observe that $\cone (v_1,\dots, v_m)$ has an apex  at the origin, and hence $\conv (v_1,\dots, v_m)$ cannot contain $0$. Thus, again by the supporting hyperplane theorem, there exists an $\mathbb{R}$-linear map $g \co \R^n \to \R$ such that $g(v_1),\dots, g(v_m)$ are all strictly positive. We thus get, for the convex combination above, that
	\begin{equation*}
		-ag(u) + \sum_{i=1}^m b_i g(v_i) =0, \text{ where } a, b_1,\dots, b_m\geq 0 \text{ with } a + \sum_{i=1}^m b_i=1.
	\end{equation*}
	Since the above leads to an immediate contradiction when $a=0$, we get the claimed result.
\end{proof}

\begin{rema}
	We cannot drop the condition on the apex, as exhibited in the following example: $n=2$, $m=3$, and $u=(-1,0)$, $v_1 = (0,1)$, $v_2 = (0,-1)$, $v_3=(1,1)$. For this example (see Figure \ref{fig:conewoap}), $\cone (-u,v_1,v_2, v_3)  = \{ (x,y) \in \mathbb{R}^2 \mid x \ge 0 \}$ and satisfies the hypothesis of Lemma \ref{lemm:forchiss} but $u \not\in \cone (v_1,v_2, v_3)$. This cone does not have an apex at the origin, and the boundary of the cone agrees with the conical combination of $v_1$, $v_2$; the defining equation $f$ for the boundary satisfies $f(v_1) = f(v_2)=0$, which is non-positive regardless of the sign of $f(v_3)$ or $f(-u)$.
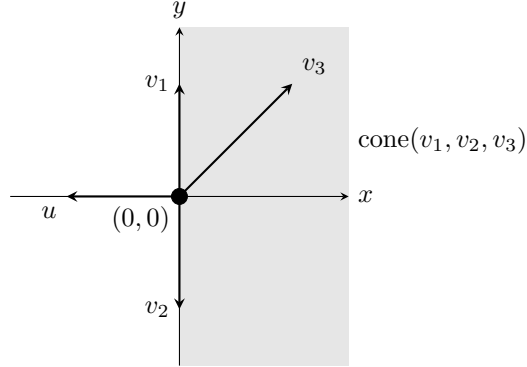
\begin{figure}
\centering
\begin{tikzpicture}[scale=1.5, >=stealth]

  \fill[lightgray!40] (0,-1.5) rectangle (1.5,1.5);

  \draw[->] (-1.5,0) -- (1.5,0) node[right] {$x$};
  \draw[->] (0,-1.5) -- (0,1.5) node[above] {$y$};

  \draw[->, thick] (0,0) -- (-1,0) node[below left] {$u$};
  \draw[->, thick] (0,0) -- (0,1) node[left] {$v_1$};
  \draw[->, thick] (0,0) -- (0,-1) node[left] {$v_2$};
  \draw[->, thick] (0,0) -- (1,1) node[above right] {$v_3$};
  \node[right] at (1.5,0.5) {$\cone (v_1,v_2, v_3)$};
  
  \filldraw[black] (0,0) circle (2pt);
  \node[below left] at (0,0) {$(0,0)$};

\end{tikzpicture}
\caption{Lemma \ref{lemm:forchiss} fails for a cone without an apex.}
\label{fig:conewoap}
\end{figure}
\end{rema}

We are ready to show Lemma \ref{lemm:chiss}.
\begin{proof}[Proof of Lemma \ref{lemm:chiss}]
Let $(z,w) \in \overline{M}$. Since $\Sigma = \cone (A_1,A_2,A_3,B_1,B_2,B_3)$ has the apex at the origin, so does its subcone $\sigma_{z,w}$. We now apply Lemma \ref{lemm:forchiss} to the generators of $\sigma_{z,w}$, with $n=2$. 
	
	Suppose first $(z,w) \in \overline{M}^{\chiss}$. Then the weight (\ref{dfeqhmwgt}) is nonnegative for any $\alpha =(\alpha_1,\alpha_2) \in \Z^2$, by the Hilbert--Mumford criterion. Noting that the definition (\ref{dfeqhmwgt}) naturally extends to any $\alpha =(\alpha_1,\alpha_2) \in \mathbb{Q}^2$ by clearing up the denominators, we find that for any generators $v_1 , \dots , v_m$ of the cone $\sigma_{z,w}$ and for any $\mathbb{R}$-linear map $f \co \mathbb{R}^2 \to \mathbb{R}$ with coefficients in $\mathbb{Q}$, at least one of $f(-C), f(v_1),\dots, f(v_m)$ is non-positive, since any nontrivial $\mathbb{Q}$-linear map $f \co \mathbb{Q}^n \to \mathbb{Q}$ can be written as $f (v) = \langle \alpha , v \rangle$ for some $\alpha \in \mathbb{Q}^2 \setminus \{ 0 \}$. Thus Lemma \ref{lemm:forchiss} implies $C \in \sigma_{z,w}$.
	
	Suppose conversely $C \in \sigma_{z,w}$. Again by Lemma \ref{lemm:forchiss} and by arguing as above, we find that the weight (\ref{dfeqhmwgt}) is nonnegative for any $\alpha \in \mathbb{Z}^2 \setminus \{ 0 \}$. The Hilbert--Mumford criterion concludes $(z,w) \in \overline{M}^{\chiss}$.
\end{proof}

For $\chi$-stability, we shall see isotropy subgroups at $(z,w) \in \overline{M}$. 
	Let $(z,w) \in \overline{M}$. Then the isotropy subgroup of $(\C^*)^2$ at $(z,w)$ coincides with 
	\begin{equation*}
		\{g \in (\C^*)^2 \mid \text{$g^{A_i}= 1$ for all $i$ such that $z_i \neq 0$ and $g^{B_j}=1$ for all $j$ such that $w_j \neq 0$}\}. 
	\end{equation*}
	Thus we have the following:
	\begin{lemm}\label{lemm:chis}
		Let $(z,w) \in \overline{M}$. Then the isotropy subgroup of $(\C^*)^2$ at $(z,w)$ is of dimension $0$ if and only if the $\mathbb{R}$-linear hull of $\sigma_{z,w}$ agrees with $\mathbb{R}^2$.
	\end{lemm}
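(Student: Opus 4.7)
The plan is to identify the isotropy subgroup with the kernel of a homomorphism of algebraic tori whose rank is controlled by the relevant weight vectors, and then to translate the rank condition into a statement about the $\R$-linear hull of $\sigma_{z,w}$.

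First, I would use the explicit description of the isotropy given in the paragraph immediately preceding the lemma. Collecting the weights of the nonzero coordinates into a finite set
\begin{equation*}
	\{ v_1, \dots, v_m \} := \{ A_i \mid z_i \neq 0 \} \cup \{ B_j \mid w_j \neq 0 \} \subset \Z^2,
\end{equation*}
which by definition is a generating set for the cone $\sigma_{z,w}$, these vectors assemble into a group homomorphism of algebraic tori
\begin{equation*}
	\varphi \co (\C^*)^2 \ni g \mapsto (g^{v_1}, \dots, g^{v_m}) \in (\C^*)^m,
\end{equation*}
whose kernel is precisely the isotropy subgroup of $(\C^*)^2$ at $(z,w)$.

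Second, I would invoke the standard correspondence between algebraic subgroups of $(\C^*)^2$ and sublattices of its character lattice $\Z^2$: the image of $\varphi$ is an algebraic subtorus (times a finite group) whose dimension equals the rank $r$ of the integer $2 \times m$ matrix $[v_1 \ \cdots \ v_m]$, so that $\dim \ker \varphi = 2 - r$. Since the $\R$-linear hull of the cone $\sigma_{z,w}$ coincides with the $\R$-span of its generators $v_1, \dots, v_m$, its $\R$-linear dimension is also $r$.

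Combining the two observations, $\dim \ker \varphi = 0$ if and only if $r = 2$, which is equivalent to the $\R$-linear hull of $\sigma_{z,w}$ being all of $\R^2$, proving the equivalence. There is no real obstacle here beyond correctly matching $\sigma_{z,w}$ with the set of weights indexed by nonzero coordinates; the computation of $\dim \ker \varphi$ in terms of the rank of the weight matrix is a standard fact about tori.
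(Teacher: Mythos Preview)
Your argument is correct and is precisely the reasoning the paper has in mind: the paper states the explicit description of the isotropy subgroup immediately before the lemma and then writes ``Thus we have the following,'' leaving the lemma without a formal proof. Your proposal simply unpacks that ``thus'' by identifying the isotropy with $\ker\varphi$ for the weight homomorphism $\varphi$ and reading off $\dim\ker\varphi = 2 - \mathrm{rank}[v_1\,\cdots\,v_m]$, which is exactly the intended (and standard) justification.
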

	\begin{lemm}\label{lemm:chis2}
		$M \subset \overline{M}^{\chis}$ if and only if $C \in \Int \cone (A_i,B_j)$ for all  $i,j$ with $i \neq j$. 
	\end{lemm}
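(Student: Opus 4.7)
The plan is first to upgrade Lemma \ref{lemm:chiss} to a characterization of $\chi$-stability, and then to verify the two implications using concrete test points. Specifically, I would first establish the equivalence
\[
    (z,w) \in \overline{M}^{\chis} \iff C \in \Int \sigma_{z,w},
\]
where the interior is taken in $\mathbb{R}^2$. The $\Leftarrow$ direction is a strict-inequality version of the Hilbert--Mumford criterion combined with Lemma \ref{lemm:chis}: if $C \in \Int \sigma_{z,w}$ then $\sigma_{z,w}$ is automatically two-dimensional (so the isotropy of $(\C^*)^2$ at $(z,w)$ is finite), and for any nontrivial $\alpha \in \Z^2$ either some generator $v$ of $\sigma_{z,w}$ satisfies $\langle v, \alpha \rangle < 0$ (making $\mu_\chi(\lambda,(z,w)) > 0$ directly), or $\langle v, \alpha \rangle \geq 0$ for every generator, in which case $\alpha$ is a nonzero element of $\sigma_{z,w}^\vee$ and $C \in \Int \sigma_{z,w}$ forces $\langle C, \alpha \rangle > 0$, so $-\langle C, \alpha \rangle < 0$ and again $\mu_\chi > 0$. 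The $\Rightarrow$ direction uses a supporting hyperplane of $\sigma_{z,w}$ passing through a boundary point $C$ to produce a nontrivial 1-PS with $\mu_\chi = 0$, contradicting stability.

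For the forward direction of Lemma \ref{lemm:chis2}, I would fix any pair $i \neq j$ and take the test point $(z,w) \in \C^6$ with $z_i = w_j = 1$ and all other coordinates zero. Since $i \neq j$, the constraint $\sum_k z_k w_k = 0$ is automatic, and obviously $z, w \neq 0$, so $(z,w) \in M$. The hypothesis $M \subset \overline{M}^{\chis}$ and the equivalence above then yield $C \in \Int \sigma_{z,w} = \Int \cone(A_i, B_j)$.

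For the converse, given $(z,w) \in M$, I would first show that there always exist indices $i \neq j$ with $z_i \neq 0$ and $w_j \neq 0$: if no such pair existed, the nonzero supports of $z$ and $w$ would be forced to coincide at a single common index $i_0$, and then $\sum_k z_k w_k = z_{i_0} w_{i_0} \neq 0$, contradicting $(z,w) \in M$. Hence $\cone(A_i, B_j) \subset \sigma_{z,w}$, and since $\cone(A_i, B_j)$ is two-dimensional (because it contains $C$ in its interior by hypothesis), we have $\Int \cone(A_i, B_j) \subset \Int \sigma_{z,w}$ in $\mathbb{R}^2$, so $C \in \Int \sigma_{z,w}$ and $(z,w) \in \overline{M}^{\chis}$. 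The main delicacy lies in the strict stability characterization of the first paragraph; once that is in hand, the rest of the argument is short and essentially combinatorial.
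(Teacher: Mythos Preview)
Your argument is correct, and the overall architecture---test points $(e_i,e_j)$ for the forward direction, and the observation that any $(z,w)\in M$ admits indices $i'\neq j'$ with $z_{i'}\neq 0$, $w_{j'}\neq 0$ for the converse---matches the paper's exactly. The one organizational difference is that you first isolate the clean equivalence
\[
    (z,w)\in\overline{M}^{\chis}\quad\Longleftrightarrow\quad C\in\Int\sigma_{z,w},
\]
a stable analogue of Lemma~\ref{lemm:chiss}, and then read off both directions of Lemma~\ref{lemm:chis2} as immediate corollaries. The paper does not state this equivalence separately; instead, for the ``only if'' part it first gets $C\in\cone(A_i,B_j)$ from semistability, then rules out $C\in\cone(A_i)\cup\cone(B_j)$ by exhibiting an explicit destabilizing $1$-PS and invokes Lemma~\ref{lemm:Intcone}, while for the ``if'' part it verifies the strict Hilbert--Mumford inequality directly from the relation $aA_{i'}+bB_{j'}=C$ with $a,b>0$. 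Your packaging is slightly more conceptual and makes the two directions symmetric; the paper's version avoids the (easy but not entirely trivial) supporting-hyperplane and dual-cone reasoning by working with explicit $1$-PS's. Either way the substance is the same.
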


	\begin{proof}
		For the ``only if'' part, we assume that $M \subset \overline{M}^{\chis}$. Let $(i,j) \in \{1,2,3\}^2$ and assume that $i\neq j$. Then $(e_i,e_j) \in \C^3 \times \C^3$ sits in $M$, where $e_i$ and $e_j$ denote $i$-th and $j$-th standard basis vectors of $\C^3$, respectively. Since $(e_i,e_j) \in M \subset \overline{M}^{\chis} \subset \overline{M}^{\chiss}$, it follows from Lemma \ref{lemm:chiss} that $C \in \cone (A_i, B_j)$. It remains to prove $C \in \Int \cone (A_i,B_j)$. Suppose for contradiction that $C = aA_i$ for some non-negative real number $a$. Choosing an element $\alpha \in \mathbb{Q}^2$ orthogonal to $A_i$ in $\mathbb{Q}^2$, with an appropriate sign, we can find $\alpha \in \Z^2$ so that $\alpha \neq 0$, $\langle A_i,\alpha \rangle =0$ and $\langle B_j, \alpha \rangle \geq 0$, by clearing up the denominators. Let $\lambda$ be the $1$-PS given by $\lambda (h) = (h^{\alpha_1},h^{\alpha_2})$ for $h \in \C^*$. Then $\mu_\chi(\lambda, (e_i,e_j)) =-\min \{\langle A_i,\alpha\rangle, \langle B_j,\alpha\rangle, -\langle aA_i, \alpha\rangle\}= 0$, but $\lambda (\C^*)$ is nontrivial. This contradicts $M \subset \overline{M}^{\chis}$, as required, hence $C \notin \cone (A_i)$. By the same argument we also have $C \notin \cone (B_j)$. Therefore $C \in \Int \cone (A_i,B_j)$ by Lemma \ref{lemm:Intcone}, proving the ``only if'' part. 
		
		For the ``if'' part, we assume that $C \in \Int \cone (A_i,B_j)$ for all  $i,j$ with $i \neq j$. Let $(z,w) \in M$. Then there exists a pair of indices $(i',j')$ with $i'\neq j'$, such that $z_{i'} \neq 0$ and $w_{j'} \neq 0$. Indeed, if such a pair does not exist, we get
		\begin{equation} \label{eqziwjpf}
			z_i=0 \text{ or } w_j=0 \text{ for any } i,j=1,2,3 \text{ with } i \neq j.
		\end{equation}
		On the other hand, the assumption $z \neq 0$ and $w \neq 0$ implies that there exists $k,l \in \{ 1,2,3 \}$ such that $z_k \neq 0$ and $w_l \neq 0$. The condition (\ref{eqziwjpf}) implies $k=l$, and we also have $\sum_{i \neq k} z_i w_i= 0$ by applying (\ref{eqziwjpf}) to a pair of distinct indices in $\{ 1,2,3 \} \setminus \{ k \}$. We thus get $\sum_{i =1}^3 z_iw_i \neq 0$, which contradicts $(z,w) \in M$.
		
		With such $(i',j')$ given, Lemma \ref{lemm:chiss} and 
		\begin{equation*}
			C \in \Int \cone (A_{i'},B_{j'}) \subset \sigma_{z,w}
		\end{equation*}
		implies $(z,w) \in \overline{M}^{\chiss}$. By the Hilbert-Mumford criterion (see Section \ref{sec:HMC}),  $\mu_\chi(\lambda, (z,w)) \geq 0$ for any $1$-PS $\lambda$. It remains to show that the equality holds if and only if $\lambda(\C^*)$ is trivial. 
		Let $\alpha = (\alpha_1,\alpha_2) \in \Z^2$ and $\lambda$ the $1$-PS given by $\lambda(h) = (h^{\alpha_1},h^{\alpha_2})$ for $h \in \C^*$. If $\lambda (\C^*)$ is trivial, then $\alpha = 0$ and hence $\mu_\chi(\lambda, (z,w)) = 0$. Assume that $\lambda(\C^*)$ is nontrivial. Then $\alpha \neq 0$.  
		Since $C \in \Int \cone (A_{i'},B_{j'})$, we have that $A_{i'}$, $B_{j'}$ and $C$ are pairwise linearly independent. We take positive real numbers $a,b$ so that $aA_{i'} +bB_{j'} -C=0$. By taking inner product with $\alpha$, we have that among $\langle A_{i'}, \alpha\rangle$, $\langle B_{j'}, \alpha\rangle$, $\langle -C, \alpha\rangle$, at least one is negative and at least one is positive. Thus $\mu_\chi (\lambda, (z,w)) >0$. Namely, $\mu_\chi(\lambda, (z,w)) = 0$ if and only if $\lambda(\C^*)$ is trivial, as required.
	\end{proof}
	\begin{lemm}\label{lemm:stability}
		Assume that $M \subset \overline{M}^{\chis}$. Then $M = \overline{M}^{\chis} = \overline{M}^{\chiss}$.
	\end{lemm}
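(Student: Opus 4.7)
The plan is to establish the single inclusion $\overline{M}^{\chiss} \subset M$; combined with the hypothesis $M \subset \overline{M}^{\chis}$ and the tautological inclusion $\overline{M}^{\chis} \subset \overline{M}^{\chiss}$, this collapses the chain $M \subset \overline{M}^{\chis} \subset \overline{M}^{\chiss} \subset M$ to the desired equalities.

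To show $\overline{M}^{\chiss} \subset M$, I would take $(z,w) \in \overline{M}^{\chiss}$ and rule out the degenerate cases $z = 0$ and $w = 0$; the quadric relation $\sum_{i=1}^3 z_i w_i = 0$ is automatic since $(z,w) \in \overline{M}$. The whole setup is symmetric under simultaneously swapping the roles of the $z_i$'s with the $w_i$'s and the $A_i$'s with the $B_i$'s, so it suffices to treat $z = 0$. In that case $\sigma_{z,w} \subset \cone(B_1, B_2, B_3)$, and Lemma \ref{lemm:chiss} forces $C \in \cone(B_1, B_2, B_3)$.

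The heart of the argument is to contradict this using Lemma \ref{lemm:chis2}, which under the hypothesis yields $C \in \Int \cone(A_i, B_j)$ for all $i \neq j$. Since $\chi$ is nontrivial, $C \neq 0$, so there exists a nonzero linear functional $f \co \R^2 \to \R$ with $f(C) = 0$. For each pair $i \neq j$, the interior condition produces a relation $C = \alpha A_i + \beta B_j$ with $\alpha, \beta > 0$ and with $A_i, B_j$ linearly independent, forcing $f(A_i)$ and $f(B_j)$ to be nonzero with opposite signs. With $f(B_j) < 0$ for all $j$, writing $C = \sum_j b_j B_j$ with $b_j \geq 0$ then forces $0 = f(C) = \sum_j b_j f(B_j) \leq 0$, so every $b_j = 0$ and $C = 0$, a contradiction.

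The main obstacle I expect is the bookkeeping in the sign-propagation that delivers $f(B_j) < 0$ for every $j$ from the six opposite-sign constraints; it should go through as follows. After replacing $f$ by $-f$ if necessary so that $f(A_1) > 0$, the pairs $(1,2)$ and $(1,3)$ give $f(B_2), f(B_3) < 0$; the pairs $(3,2)$ and $(2,3)$ then give $f(A_3), f(A_2) > 0$; and finally the pair $(2,1)$ gives $f(B_1) < 0$. With only three indices on each side, no genuine difficulty beyond this finite propagation is expected.
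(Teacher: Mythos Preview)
Your argument is correct. The overall architecture---reducing to $\overline{M}^{\chiss}\subset M$ and, after invoking Lemma~\ref{lemm:chiss} and Lemma~\ref{lemm:chis2}, deriving a contradiction from $C\in\cone(B_1,B_2,B_3)$ together with $C\in\Int\cone(A_i,B_j)$ for all $i\neq j$---matches the paper. The difference lies in how the contradiction is extracted. The paper applies Carath\'eodory to trap $C$ in some $\cone(A_{i_1},A_{i_2})$, upgrades this to the interior via Lemma~\ref{lemm:Intcone}, and then eliminates the common $B_j$ from two interior expressions $C=a_kA_{i_k}+b_kB_j$ to produce a linear relation with a negative coefficient, contradicting the basis property. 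Your route instead fixes a single nonzero functional $f$ with $f(C)=0$ and propagates signs through the six opposite-sign constraints to force $f(B_j)<0$ for all $j$, which is incompatible with $C\in\cone(B_1,B_2,B_3)$. Your approach avoids both Carath\'eodory and Lemma~\ref{lemm:Intcone} and is arguably cleaner; indeed, it is essentially the same functional-and-sign-propagation trick the paper deploys later in the proof of Lemma~\ref{lemm:cfptsr2}. The paper's version, on the other hand, stays closer to explicit coefficient manipulation and reuses the earlier auxiliary lemma. Both are short and valid.
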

	\begin{proof}
		The inclusions $M \subset \overline{M}^{\chis} \subset \overline{M}^{\chiss}$ are obvious. Let $(z,w) \in \overline{M}^{\chiss}$. We show that $(z,w) \in M$, that is, $z\neq 0$ and $w \neq 0$. Assume that $w=0$. Then, it follows from Lemma \ref{lemm:chiss} that $C \in \cone \{A_i \mid z_i \neq 0\}$.  By Careth\'eodory's theorem, there exists a pair $(i_1,i_2) \in \{1,2,3\}^2$ with $i_1 \neq i_2$  such that $C \in \cone (A_{i_1}, A_{i_2})$. Let $j \in \{1,2,3\}$ be the index which is not $i_1$ and $i_2$. By the assumption and Lemma \ref{lemm:chis2}, we have $C \in \Int \cone (A_{i_1}, B_j)$ and $C \in \Int \cone (A_{i_2}, B_j)$. In particular, $C \notin \cone (A_{i_1}) \cup \cone (A_{i_2})$. By Lemma \ref{lemm:Intcone}, we have $C \in \Int \cone (A_{i_1}, A_{i_2})$. In particular, $A_{i_1}$ and $A_{i_2}$ form a basis of $\R^2$. We take positive real numbers $a_1,b_1,a_2, b_2$ so that $a_{1}A_{i_1} + b_1B_j = C$ and $a_{2}A_{i_2} + b_{2}B_j = C$. By eliminating $B_j$ from these equalities, we have $a_1b_2A_{i_1}-a_2b_1A_{i_2} = (b_2-b_1)C$. The coefficient $-a_2b_1$ is negative. This contradicts $C \in \Int \cone (A_{i_1}, A_{i_2})$ because $A_{i_1}$ and $A_{i_2}$ form a basis of $\R^2$. Thus $w \neq 0$. The same argument works for $z \neq 0$. Therefore $(z,w) \in M$, as required. 
	\end{proof}
	
\section{The main result and its application to biquotients of $SU(3)$}

The results we have established so far immediately yield the following.

\begin{theo}\label{theo:maintheo}
Let $A_1,A_2,A_3,B_1,B_2,B_3, C \in \Z^2$ be  such that $A_1 +B_1 = A_2+B_2 = A_3+B_3$.
	Consider  the action of $(\C^*)^2$ on $M$ whose weight is given by $A_1, A_2, A_3, B_1, B_2, B_3\in \Z^2$, and  a nontrivial linear character $\chi \co (\C^*)^2 \to \C^*$ whose weight is given by $\chi (g) = g^C$ for $C \in \Z^2$. Then, $\chi$ satisfies $M = \overline{M}^{\chis}$ if and only if it satisfies the  ``Japanese fan'' condition 
	\begin{itemize}
		\item[($\star$)] $\Sigma = \cone (A_1,A_2,A_3,B_1,B_2,B_3)$ has an apex at the origin and $C \in \Int \cone (A_i,B_j)$ for all  $i,j$ with $i \neq j$.  
	\end{itemize}
	Moreover, when the condition ($\star$) is satisfied, the quotient topological space $M / (\C^*)^2$ is a complex analytic space isomorphic  to the GIT quotient $\overline{M} \git_\chi (\C^*)^2$. In particular, $M / (\C^*)^2$ is compact. 
\end{theo}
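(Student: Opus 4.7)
My plan is to assemble the lemmas of Section 3 with Proposition \ref{prop:dgzrc} to prove both halves of the theorem.

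For the equivalence $M = \overline{M}^{\chis} \Leftrightarrow (\star)$, the ``if'' direction is nearly immediate: the interior condition in $(\star)$ is precisely the hypothesis of Lemma \ref{lemm:chis2}, yielding $M \subset \overline{M}^{\chis}$, which Lemma \ref{lemm:stability} upgrades to equality. For the ``only if'' direction, Lemma \ref{lemm:chis2} again supplies the interior condition from $M \subset \overline{M}^{\chis}$, and the remaining task is to derive the apex-$0$ property of $\Sigma$ from this together with the ambient constraint $A_i + B_i = W$ common to all $i$. I expect this to be the main obstacle.

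The apex-$0$ argument will be combinatorial: after rotating so that $C$ points at angle $0$, the interior conditions force all angles $\arg A_i$ to lie in one open arc $(-\pi, 0)$ (WLOG), all $\arg B_j$ in $(0, \pi)$, together with the bound $\arg B_j - \arg A_i < \pi$ for all $i \neq j$. If the argmin of $\arg A_i$ and argmax of $\arg B_j$ coincided at a single index $k$, the constraint $A_k + B_k = W$ would force $W$ to point at an angle near $\pm\pi$, while the bounds applied to the other indices $i \neq k$ would force $\arg A_i$ near $0^-$ and $\arg B_i$ near $0^+$, so $A_i + B_i$ would point near angle $0$, contradicting $A_i + B_i = W$. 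Hence the extrema occur at distinct indices $i^* \neq j^*$, and the interior bound for that pair yields $\arg B_{j^*} - \arg A_{i^*} < \pi$: all six generators lie in a common open half-plane, so $\Sigma$ has apex $0$.

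For the second part, $(\star)$ gives $R_0^\chi = \C$ by Proposition \ref{prop:dgzrc}, so $\overline{M} \git_\chi (\C^*)^2 = \Proj(R^\chi)$ is a projective scheme, hence compact. By Lemma \ref{lemm:stability} we have $M = \overline{M}^{\chis} = \overline{M}^{\chiss}$, so every semistable orbit is closed with finite stabilizer; the set-theoretic GIT quotient thus coincides with $M/(\C^*)^2$. The isomorphism as complex analytic spaces will follow from standard GIT theory together with the universal property of quotients of complex manifolds proved in the Appendix, and compactness of $M/(\C^*)^2$ is then immediate from the projectivity.
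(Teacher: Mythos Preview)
Your assembly of Lemmas~\ref{lemm:chis2} and~\ref{lemm:stability} for the equivalence, and of Proposition~\ref{prop:dgzrc} plus the Appendix for the second statement, matches the paper's proof exactly. The one place you diverge is the apex condition: the paper does \emph{not} derive apex~$0$ in the ``only if'' direction. The sentence ``Remark that $\Sigma$ has the apex~$0$'' just before Lemma~\ref{lemm:chiss} functions as a standing hypothesis for all of Section~3, so the paper's Lemmas~\ref{lemm:chiss}--\ref{lemm:stability} are already stated under that assumption, and the theorem's proof simply quotes them. Your decision to prove apex~$0$ from the interior conditions together with $A_i+B_i=W$ is therefore strictly \emph{more} than the paper does; it is genuinely needed if one drops the standing hypothesis, since the interior conditions alone do not force apex~$0$ once the constraint $A_i+B_i=\text{const}$ is removed.

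Your Case~1 ($i^*\neq j^*$) is clean, but your Case~2 argument is not a proof as written: the claims ``$W$ points near $\pm\pi$'' and ``$A_i+B_i$ points near~$0$'' are heuristics that can fail (e.g.\ $A_k=(0,-1)$, $B_k=(0,2)$ give $W=(0,1)$ at angle $\pi/2$). Replace angles by a linear functional: choose $h$ with $h(B_k)=0$ and $h(C)>0$. Assuming (as one may) $\arg B_k-\arg A_k\ge\pi$, one gets $h(A_k)\le 0$, while for every $i\neq k$ the bounds $\arg A_i>\arg B_k-\pi$ and $\arg B_i<\arg A_k+\pi\le\arg B_k$ force $h(A_i)>0$ and $h(B_i)>0$. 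Then $h(W)=h(A_k)+h(B_k)\le 0$ but also $h(W)=h(A_i)+h(B_i)>0$, the desired contradiction.

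For the second part your outline is right; just note that the non-obvious input is \emph{properness} of the $(\C^*)^2$-action on $M=\overline{M}^{\chis}$, which the paper takes from \cite{HMP1998}. That is what makes the Appendix applicable, and the identification with $\Proj(R^\chi)$ then comes from the universal property together with \cite[Proposition~5.5]{Simpson1994}.
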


\begin{proof}
	The first claim follows  directly from Lemmas \ref{lemm:chis2} and \ref{lemm:stability}.
	
	Proposition \ref{prop:dgzrc} implies that the scheme $\overline{M} \git_\chi (\C^*)^2 = \Proj (R^\chi)$ is projective over $R_0^\chi = \C$, which in particular implies that $\overline{M} \git_\chi (\C^*)^2$ is compact in the analytic topology. 
	
	Since $M = \overline{M}^{\chis}$, the action of $(\C^*)^2$ on $M$ is locally free. 
	As similar to  \cite[Section 4]{HMP1998}, Palais's characterization of the properness \cite{Palais61} and Luna's  slice theorem \cite{Luna} (see also \cite{Drezet2004}) imply that the action of $(\C^*)^2$ on $M$ is proper.
	 Therefore the quotient space $M/(\C^*)^2$ has a structure of a complex orbifold and hence an analytic space (see the appendix below). It follows from the universal  property and \cite[Proposition 5.5]{Simpson1994}\footnote{\cite[Proposition 5.5]{Simpson1994} is stated for semisimple groups, but the same argument works for reductive groups.} that $M/(\C^*)^2$ is isomorphic to $\overline{M} \git_\chi (\C^*)^2$ as analytic spaces. 
\end{proof}

Theorem \ref{theo:maintheo} has an  application to complex structures on $SU(3)$ and its biquotients. Let $T$ be the maximal compact torus $\{g = \diag (g_1,g_2,g_3) \mid g_1,g_2,g_3 \in S^1, g_1g_2g_3=1\}$ in $SU(3)$. Let $\rho_L, \rho_R \co (S^1)^2 \to T$ be smooth homomorphisms given by 
	\begin{equation*}
		\begin{split}
			\rho_L(t) &= \diag (t^{w_1^L}, t^{w_2^L},t^{w_3^L}) \\
			\rho_R(t) &= \diag (t^{w_1^R}, t^{w_2^R},t^{w_3^R})
		\end{split}
	\end{equation*}
	for $t \in (S^1)^2$, where $w_j^L, w_j^R \in \Z^2$. These homomorphisms give an action of $(S^1)^2$ on $SU(3)$ by $(t, g) \mapsto \rho_L(t)g\rho_R(t)^{-1}$ for $t \in (S^1)^2$ and $g \in SU(3)$. The quotient space $SU(3)/(\rho_L,\rho_R)((S^1)^2)$ of $SU(3)$ by this $(S^1)^2$-action is called a biquotient. 
	Put 
	\begin{equation*}
		A_j := w_j^L-w_1^R, \quad B_j := -w_j^L + w_3^R, \quad C := -w_1^R + w_3^R.
	\end{equation*}
	By \cite[Theorem 1.1]{IK2025}, if these elements satisfy the following condition 
	\begin{itemize}
		\item[($\star'$)] $C \notin \cone (A_i,A_j) \cup \cone (B_i,B_j)$ for all $i,j \in \{1,2,3\}$ and $C \in \cone (A_i,B_j)$ for all $i,j \in \{1,2,3\}$, 
	\end{itemize}
	then there exist a $T \times T$-invariant complex structure on $SU(3) $ and a  $T \times T/(\rho_L,\rho_R)((S^1)^2)$-invariant K\"ahler orbifold structure on the quotient $SU(3)/(\rho_L,\rho_R)((S^1)^2)$. 
	
	We briefly explain the complex structures on $SU(3)$ and $SU(3)/(\rho_L,\rho_R)((S^1)^2)$. The function $\Phi \co M \to \R^2$ defined as 
	\begin{equation*}
		\Phi (z,w) := \sum_{j=1}^3(A_j |z_j|^2+B_j |w_j|^2)
	\end{equation*}
	is a moment map for the $(S^1)^2$-action on $M$ given by 
	\begin{equation}\label{eq:cpttorusactonM}
		g\cdot (z,w):= (g^{A_1}z_1, g^{A_2}z_2, g^{A_3}z_3, g^{B_1}w_1, g^{B_2}w_2, g^{B_3}w_3)
	\end{equation} 
	for $g \in (S^1)^2$ and $(z,w) \in M$. Let $f_1, f_2 \co M \to \R$ be the first and second entry of $\Phi \co M \to \R^2$, respectively. Let $X$ and $Y$ be the Hamiltonian vector fields for $f_1$ and $f_2$, respectively. Let $J$ be the complex structure on $M$. Then we have commuting vector fields 
	\begin{equation*}
		Z := X-JY, \quad W := JX+Y
	\end{equation*}
	on $M$. These vector fields $Z$ and $W$ give a holomorphic action of $\C$ on $M$. More precisely, if $A_i = (a_{i1}, a_{i2})$ and $B_i = (b_{i1}, b_{i2})$, then the action of $\C$ on $M$ is given by 
	\begin{equation}\label{eq:CactiononM}
		u\cdot (z,w) = (e^{(a_{11}+\sqrt{-1}a_{12})u}, \dots, e^{(b_{31}+\sqrt{-1}b_{32})u})
	\end{equation}
	for $u \in \C$ and $(z,w) \in M$. One can see that the preimage $\Phi^{-1}(C) \subset M$ is equivariantly diffeomorphic to $SU(3)$ and each orbit of the $\C$-action on $M$ is transverse to the manifold $\Phi^{-1}(C)$. Using the holomorphic foliation obtained by this $\C$-action, we equip a complex structure on $\Phi^{-1}(C)$. The preimage $\Phi^{-1}(C)$ is invariant under the action of $(S^1)^2$ and orbits form a holomorphic foliation on $\Phi^{-1}(C)$. Thus quotient space $\Phi^{-1}(C)/(S^1)^2$ is a complex orbifold. The complex structures on $SU(3)$ and the biquotient $SU(3)/(\rho_L,\rho_R)((S^1)^2)$ are induced by the equivariant diffeomorphism between $SU(3)$ and $\Phi^{-1}(C)$. 
	
We remark that the condition ($\star'$) above is equivalent to the condition ($\star$) in Theorem \ref{theo:maintheo}. 
\begin{lemm} \label{lemm:cfptsr2}
	Let $A_1,A_2,A_3,B_1,B_2,B_3,C \in \R^2$. Then the following are equivalent: 
	\begin{enumerate}
		\item[($\star$)] $\Sigma = \cone (A_1,A_2,A_3,B_1,B_2,B_3)$ has an apex  at the origin and $C \in \Int \cone (A_i,B_j)$ for all  $i,j$ with $i \neq j$. 
		\item[($\star'$)] $C \notin \cone (A_i,A_j) \cup \cone (B_i,B_j)$ for all $i,j \in \{1,2,3\}$ and $C \in \cone (A_i,B_j)$ for all $i,j \in \{1,2,3\}$.
	\end{enumerate}
\end{lemm}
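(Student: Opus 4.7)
My strategy is to translate both conditions into the angular or coordinate geometry of the six generators in $\R^2$, after which each implication reduces to a short combinatorial check.

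For $(\star) \Rightarrow (\star')$, I exploit the apex assumption, which supplies a linear functional $\ell$ positive on $\Sigma \setminus \{0\}$. All six generators and $C$ then live in an open half-plane of $\R^2$, and each acquires a well-defined angle $\alpha_i, \beta_j, \gamma$ in an interval of length $\pi$. The hypothesis $C \in \Int \cone(A_i, B_j)$ becomes the statement that $\gamma$ lies strictly between $\alpha_i$ and $\beta_j$. I then plan to show that all $\alpha_i$ lie on one side of $\gamma$ and all $\beta_j$ on the other: if instead $\alpha_{i_1} < \gamma < \alpha_{i_2}$ for some $i_1 \neq i_2$, I pick $k \in \{1,2,3\} \setminus \{i_1, i_2\}$; the pair $(i_1, k)$ forces $\beta_k > \gamma$, while $(i_2, k)$ forces $\beta_k < \gamma$, which is a contradiction. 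Once $\alpha_i < \gamma < \beta_j$ holds for all $i, j$, both clauses of $(\star')$ follow immediately from the angle ordering.

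For the converse $(\star') \Rightarrow (\star)$, I first upgrade the cone memberships using Lemma \ref{lemm:Intcone}: the hypothesis $C \in \cone(A_i, B_j)$ combined with $C \notin \cone(A_i) \cup \cone(B_j)$ (read off from the non-membership clauses of $(\star')$ by taking $i = j$) yields $C \in \Int \cone(A_i, B_j)$ for every $i, j$. This already gives the second clause of $(\star)$. The main obstacle is the apex condition, since $(\star')$ makes no direct statement about the global structure of $\Sigma$. To handle it I choose coordinates so that $C = (0, c)$ with $c > 0$; the relation $C = \alpha_{ij} A_i + \beta_{ij} B_j$ with $\alpha_{ij}, \beta_{ij} > 0$ forces the first coordinates of $A_i$ and $B_j$ to have opposite signs for every pair. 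Since each $A_i$ pairs with every $B_j$, all $A_i$ share one sign of their first coordinate and all $B_j$ the opposite; writing $A_i = (-a_i, p_i), B_j = (b_j, q_j)$ with $a_i, b_j > 0$, the interior condition translates to $p_i b_j + a_i q_j > 0$ for all $i, j$.

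The apex is then produced by seeking a functional $g(v) = r v_1 + v_2$ strictly positive on every generator; the requirements $g(A_i), g(B_j) > 0$ reduce to $\max_j(-q_j/b_j) < r < \min_i(p_i/a_i)$, an interval that is nonempty precisely by the positivity derived above. Any $r$ in this interval yields a functional witnessing that $\Sigma$ has apex $0$, completing the argument.
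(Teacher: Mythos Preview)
Your argument is correct, and it proceeds along a genuinely different path from the paper's proof. For the direction $(\star) \Rightarrow (\star')$, the paper argues in two separate pieces: it rules out $C \in \cone(A_i,A_j)$ by the algebraic elimination trick from the proof of Lemma~\ref{lemm:stability} (eliminating $B_k$ from the two relations $C = a_1 A_i + b_1 B_k$ and $C = a_2 A_j + b_2 B_k$), and then establishes $C \in \cone(A_i,B_i)$ by constructing a dual basis $\alpha,\beta$ of $(\R^2)^*$ with $\alpha$ positive on $\Sigma$ and $\beta(C)=0$, from which the coefficients expressing $C$ in terms of $A_i,B_i$ are computed explicitly. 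Your single angular-ordering argument handles both clauses at once and is arguably more transparent: once all $\alpha_i$ land on one side of $\gamma$ and all $\beta_j$ on the other, both conclusions of $(\star')$ are immediate. For the converse $(\star') \Rightarrow (\star)$, the paper simply cites \cite[Lemma~4.4]{IK2025}, whereas you give a self-contained argument: after upgrading to $C \in \Int\cone(A_i,B_j)$ via Lemma~\ref{lemm:Intcone}, you place $C$ on the positive $y$-axis, read off opposite signs for the first coordinates of the $A_i$ and $B_j$, and then exhibit the apex functional $g(v)=rv_1+v_2$ by showing the relevant interval $\bigl(\max_j(-q_j/b_j),\,\min_i(p_i/a_i)\bigr)$ is nonempty. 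This direction is a real addition relative to the paper's text, and your construction of the separating functional is clean. The trade-off is that the paper's route is economical within its own context (it recycles Lemma~\ref{lemm:stability}), while yours is more portable and requires no external reference.
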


\begin{proof}
	The implication $(\star') \Rightarrow (\star)$ follows from \cite[Lemma 4.4]{IK2025} immediately. We show the opposite implication $(\star) \Rightarrow (\star')$. We first show that $C \notin \cone (A_i,A_j)$ for all $i,j \in \{1,2,3\}$. Since $C \in \Int \cone (A_i, B_j)$ for all $i,j$ with $i \neq j$, we have $C \notin \cone (A_i)$ for all $i \in \{1,2,3\}$. Take $i,j \in \{1,2,3\}$, with $i \neq j$, and suppose for contradiction that $C \in \cone (A_i,A_j)$. Then, $C \in \Int \cone (A_i,A_j)$ because $C \notin \cone (A_i) \cup \cone (A_j)$. By the same argument as the proof of Lemma \ref{lemm:stability}, we can see that $C \notin \Int \cone (A_i,A_j)$, yielding the desired contradiction. Therefore $C \notin \cone (A_i,A_j)$ for all $i,j$. By the same argument, we also have $C \notin \cone (B_i,B_j)$ for all $i,j$. 
	
	To prove the remaining claims, it suffices to show that $C \in \Int \cone (A_i,B_i)$ for all $i \in \{1,2,3\}$. Since $\Sigma$ has an apex at the origin, there exists a linear function $\alpha$ on $\R^2$ such that $\alpha(A_i)>0$, $\alpha(B_i)>0$ for all $i \in \{1,2,3\}$ and $\alpha (C)>0$. Since $C \notin \cone (A_1)$, $C$ and $A_1$ are linearly independent. Thus there exists a linear function $\beta$ on $\R^2$ such that $\beta (C)=0$ and $\beta (A_1)>0$. Then $\alpha$ and $\beta$ form a basis of the dual space of $\R^2$. Since $C \in \Int \cone (A_1,B_2)$ and $C \in \Int \cone (A_1,B_3)$, $\beta (C)=0$ and $\beta (A_1)>0$ imply that $\beta(B_2)<0$ and $\beta(B_3)<0$. By the same argument, we obtain $\beta (A_2), \beta(A_3)>0$ and $\beta (B_1)<0$. 
	In particular, $\alpha(A_i)\beta(B_i)-\alpha(B_i)\beta(A_i) <0$. By direct computation, $C$ is expressed as 
	\begin{equation*}
		C = \frac{\beta(B_i)}{\alpha(A_i)\beta(B_i)-\alpha(B_i)\beta(A_i)}A_i + \frac{-\beta(A_i)}{\alpha(A_i)\beta(B_i)-\alpha(B_i)\beta(A_i)}B_i.
	\end{equation*}
	Since the coefficients are all strictly positive, $C \in \Int \cone (A_i,B_i) \subset \cone (A_i,B_i)$. This completes the proof of Lemma.
\end{proof}

\begin{coro}\label{coro}
	Assume that $A_j, B_j$ ($j=1,2,3$) and $C$ satisfy the condition $(\star)$. Then, the following hold: 
	\begin{enumerate} 
		\item $SU(3)$ equipped with the above complex structure is biholomorphic to the quotient of $M$ by a free action of $\C$. 
		\item $SU(3)/(\rho_L,\rho_R)((S^1)^2)$ is isomorphic to $\overline{M} \git_\chi (\C^*)^2$ as analytic spaces. In particular, $SU(3)/(\rho_L,\rho_R)((S^1)^2)$ has a structure of a projective variety. 
	\end{enumerate}
\end{coro}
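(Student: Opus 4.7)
The plan is to identify the $\C$-action on $M$ from (\ref{eq:CactiononM}) as the restriction of the $(\C^*)^2$-action along an injective one-parameter subgroup $\iota \co \C \hookrightarrow (\C^*)^2$, show that under $(\star)$ this action is free and proper by invoking Theorem \ref{theo:maintheo}, and then prove that the inclusion $\Phi^{-1}(C) \hookrightarrow M$ induces a biholomorphism with $M/\C$. Part (2) will follow from (1) by taking the residual $(S^1)^2$-quotient and applying Theorem \ref{theo:maintheo} once more.

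Reading off the weights in (\ref{eq:CactiononM}), the $\C$-action on $M$ is the restriction of the $(\C^*)^2$-action along the homomorphism $\iota \co \C \to (\C^*)^2$, $\iota(u) = (e^u, e^{iu})$. Since $|e^u| = e^{\Rea u}$ and $|e^{iu}| = e^{-\Ima u}$, we have $\iota(\C) \cap (S^1)^2 = \{e\}$, and the multiplication map yields a Lie group isomorphism $\iota(\C) \times (S^1)^2 \cong (\C^*)^2$. Under $(\star)$, Theorem \ref{theo:maintheo} gives $M = \overline{M}^{\chis}$, so the $(\C^*)^2$-action on $M$ is proper with finite isotropies. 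Every finite subgroup of $(\C^*)^2$ is torsion and hence contained in $(S^1)^2$, so each such isotropy misses $\iota(\C) \setminus \{e\}$. The restricted $\C$-action is therefore free; it is also proper as a restriction of a proper action along a closed subgroup, so $M/\C$ inherits the structure of a complex manifold.

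Next I would show that $\Phi^{-1}(C) \hookrightarrow M \to M/\C$ is a biholomorphism. It is a local diffeomorphism by the transversality of the $\C$-orbits to $\Phi^{-1}(C)$ recorded in the paper. For surjectivity, the Kempf--Ness correspondence (available because $M = \overline{M}^{\chis}$) ensures that every $(\C^*)^2$-orbit meets $\Phi^{-1}(C)$ in a single $(S^1)^2$-orbit; the decomposition $(\C^*)^2 = \iota(\C) \cdot (S^1)^2$ together with the $(S^1)^2$-invariance of $\Phi$ then shows that the $\iota(\C)$-orbit of any $(z,w) \in M$ already meets $\Phi^{-1}(C)$. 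For injectivity, if $\iota(u) \cdot (z,w)$ and $(z,w)$ both lie in $\Phi^{-1}(C)$, Kempf--Ness gives $\iota(u) \cdot (z,w) = \sigma \cdot (z,w)$ for some $\sigma \in (S^1)^2$, so $\iota(u)\sigma^{-1}$ lies in the finite isotropy, hence in $(S^1)^2$, forcing $\iota(u) \in \iota(\C) \cap (S^1)^2 = \{e\}$ and $u=0$. The complex structure on $SU(3) \cong \Phi^{-1}(C)$ supplied by Haefliger's trick using this same $\C$-action is precisely the one making the map holomorphic, yielding a biholomorphism and proving (1).

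For (2), taking the residual $(S^1)^2$-quotient on both sides of (1) and using $(\C^*)^2 = \iota(\C) \cdot (S^1)^2$ gives $SU(3)/(\rho_L, \rho_R)((S^1)^2) \cong M/(\C^*)^2$. Theorem \ref{theo:maintheo} then identifies this as an analytic space with $\overline{M} \git_\chi (\C^*)^2$, which is projective. The main obstacle I expect is verifying that the complex structure on $SU(3)$ from Haefliger's trick really agrees with the pullback of the quotient structure from $M/\C$; concretely, this reduces to checking that the transverse holomorphic foliation in Haefliger's construction is the foliation by $\C$-orbits, which should follow by tracing through the explicit description in \cite{IK2025}.
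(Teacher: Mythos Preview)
Your proposal is correct and follows the same overall strategy as the paper: identify the $\C$-action as the restriction along $\iota(u)=(e^u,e^{\sqrt{-1}u})$, use the internal direct product $(\C^*)^2=\iota(\C)\cdot(S^1)^2$, deduce properness and freeness of the $\C$-action from properness of the $(\C^*)^2$-action on $M=\overline{M}^{\chis}$, and then pass to the $(S^1)^2$-quotient and invoke Theorem~\ref{theo:maintheo} for part~(2).

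The one tactical difference is in how you establish that $\Phi^{-1}(C)\to M/\C$ is a bijection. You argue surjectivity and injectivity directly via the Kempf--Ness correspondence, which is clean and self-contained. The paper instead first shows $M/\C$ is compact (since $(M/\C)/(S^1)^2\cong M/(\C^*)^2\cong\overline{M}\git_\chi(\C^*)^2$ is compact and $(S^1)^2$ is compact), and then asserts that compactness together with transversality forces the map $\Phi^{-1}(C)\to M/\C$ to be an equivariant diffeomorphism. The paper's route avoids naming Kempf--Ness explicitly but is terser on the injectivity step; your route makes that step transparent. Your closing remark about verifying that Haefliger's complex structure coincides with the pullback from $M/\C$ is well-placed: the paper takes this for granted, and it does indeed reduce to the observation that the transverse holomorphic foliation used in the construction is exactly the $\C$-orbit foliation.
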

\begin{proof}
It follows from the definitions \eqref{eq:algtorusactiononM} and \eqref{eq:CactiononM} and of the actions on $M$ that the action of $\C$ on $M$  is nothing but the action of $(\C^*)^2$ on restricted to the subgroup $\C \to (\C^*)^2$ given by $u \mapsto (e^u, e^{\sqrt{-1}u})$. 
This together with the properness of the action of $(\C^*)^2$ on $M$ (see \cite[Section 4]{HMP1998}) yields that the action of $\C$ on $M$ is proper. In particular, the action of $\C$ on $M$ is free. Since $(\C^*)^2$ is an internal direct product of $(S^1)^2$ and $\C \to (\C^*)^2$, we have $M /(\C^*)^2 = (M/\C)/(S^1)^2$. This together with the compactness of $\overline{M}\git_{\chi} (\C^*)^2$ yields that $M/\C$ is compact. It implies that the inclusion $\Phi^{-1}(C) \hookrightarrow M$ induces an equivariant diffeomorphism $\Phi^{-1}(C) \to M/\C$.  Thus, $\Phi^{-1}(C)$ equipped with the complex structure is biholomorphic to $M/\C$, proving (1). 

 We have the following diagram: 
\begin{equation*}
	\begin{tikzcd}
		M \ar[r] & M/\C \ar[r]  & M/(\C^*)^2\\
		 & \Phi^{-1}(C) \ar[u] \ar[r] & \Phi^{-1}(C)/(S^1)^2 \ar[u] 
	\end{tikzcd}
\end{equation*}
The horizontal arrows are holomorphic quotient maps. Since the left vertical arrow is an equivariant biholomorphic map, the right vertical arrow is an isomorphism as complex orbifolds. 
By Theorem \ref{theo:maintheo}, $M/(\C^*)^2$ is isomorphic to $\overline{M}\git_\chi (\C^*)^2$ as analytic spaces. 
Therefore the biquotient $SU(3)/(\rho_L, \rho_R)((S^1)^2)$ has a structure of projective variety,  proving (2). 
\end{proof}

\section*{Appendix: The universal property of quotients by proper and locally free actions}
	Let $X$ be a complex manifold of dimension $m$ equipped with a holomorphic action of a complex Lie group $G$ of dimension $k$. 
	We assume that the action of $G$ on $X$ is proper and locally free. 
	The purpose of this appendix is to explain that the quotient $X/G$ carries natural structures of a complex orbifold and a complex analytic space, and that it satisfies the expected universal property in the category of complex analytic spaces. 
	More precisely, if $Y$ is an analytic space and $F \co X \to Y$ is a $G$-invariant morphism, then there exists a unique morphism $\widetilde{F} \co X/G \to Y$ such that $F = \widetilde{F} \circ \pi$, where $\pi \co X \to X/G$ is the natural projection. 
	
	We conclude by constructing a holomorphic orbifold atlas for $X/G$. 
	It follows from \cite[Theorem 1']{Nirenberg58} and the locally freeness of the action of $G$ on $X$ that there exist open neighborhoods $U^{(1)}$ at $x_0 \in X$, $V^{(1)}$ at $0 \in \C^{m-k}$, $W^{(1)}$ at $0 \in \C^k$ and a holomorphic chart $\phi^{(1)} = (\phi_{1}^{(1)},\phi_{2}^{(1)}) \co U^{(1)} \to V^{(1)}\times W^{(1)}$ of $X$ centered at $x_0$ such that 
	\begin{itemize}
		\item[(A1)] For each $v \in V^{(1)}$, the set $\{ y \in U^{(1)} \mid \phi_{1}^{(1)}(y) = v\}$ is a path-connected component of $U^{(1)} \cap G\cdot x$ for some $x \in U^{(1)}$ and conversely.  
	\end{itemize}
	Since the action of $G$ on $X$ is proper, the map $G \to X$, $g \mapsto g\cdot x_0$ descends to the closed embedding $G/G_{x_0} \to X$. 
	By taking $V^{(1)}$ sufficiently small, we may assume that $U^{(1)} \cap G\cdot x_0 = \{y \in U^{(1)} \mid \phi^{(1)}(y) = 0\}$.  
	By taking an inner product on $T_{x_0}X$ invariant under the action of $G_{x_0}$, we have a decomposition $T_{x_0}X = (T_{x_0}(G\cdot x_0))^\perp \oplus T_{x_0}(G\cdot x_0)$ as $G_{x_0}$-representations. 
	The differential 
	\begin{equation*}
		(d\phi^{(1)})_{x_0} \co T_{x_0}M \to T_0\C^m \cong \C^m = \C^{m-k} \times \C^k,
	\end{equation*}
	is an isomorphism. It follows from (A1) that the image of $T_{x_0}(G\cdot x_0)$ by $(d\phi^{(1)})_{x_0}$ coincides with $\{0\} \times \C^k$. 
	Choose a linear map $L \co \C^{m-k} \to \C^{k}$ so that the linear transformation $\psi \co \C^{m-k} \times \C^{m-k} \to \C^{m-k} \times \C^k$ given by $\psi(v,w) = (v, w + L(v))$ satisfies that the image of $(T_{x_0}(G\cdot x_0))^\perp$ by $\psi \circ (d\phi^{(1)})_0$ coincides with $\C^{m-k} \times \{0\}$. Define $\phi^{(2)} := (\phi_1^{(2)},\phi_2^{(2)}) := \psi \circ \phi^{(1)}$. Then there exist an open neighborhood $U^{(2)} \subset U^{(1)}$ at $x_0$, convex open neighborhoods $V^{(2)}$ at $0 \in \C^{m-k}$, $W^{(2)}$ at $0 \in \C^k$ such that the restriction $\phi^{(2)} \co U^{(2)} \to V^{(2)} \times W^{(2)}$ satisfies 
	\begin{itemize}
		\item[(A2)] For each $v \in V^{(2)}$, the set $\{ y \in U^{(2)} \mid \phi_{1}^{(2)}(y) = v\}$ is a path-connected component of $U^{(2)} \cap G\cdot x$ for some $x \in U^{(2)}$ and vice versa.
		\item[(B2)] The differential $(d\phi^{(2)})_{x_0} \co T_{x_0}X \to T_0\C^m \cong \C^m$ fits the decompositions  $T_{x_0}X = (T_{x_0}(G\cdot x_0))^\perp \oplus T_{x_0}(G\cdot x_0)$ and $\C^m = \C^{m-k} \times \C^k$. 
	\end{itemize}
	
	Put $U^{(3)} := U^{(2)}$. We define a biholomorphic map 
	\begin{equation*} 
		\begin{split}
		\phi^{(3)} &:= (\phi_1^{(3)},\phi_2^{(3)})\\
		&:= (d\phi^{(2)})_{x_0}^{-1} \circ \phi^{(2)}
		 \co U^{(2)} \to (d\phi^{(2)})_{x_0}^{-1} (V^{(2)} \times W^{(2)}) \\
		 & \subset T_{x_0}X = (T_{x_0}(G\cdot x_0))^\perp \oplus T_{x_0}(G\cdot x_0). 
		\end{split}
	\end{equation*}
	The holomorphic chart $\phi^{(3)}$ satisfies the following: 
	\begin{itemize}
		\item[(A3)] For each $v \in (T_{x_0}(G\cdot x_0))^\perp $, the set $\{ y \in U^{(3)} \mid \phi_{1}^{(3)}(y) = v\}$ is a path-connected component of $U^{(3)} \cap G\cdot x$ for some $x \in U^{(3)}$ and vice versa, unless empty.
		\item[(B3)] The differential $(d\phi^{(3)})_{x_0}$ of $\phi^{(3)}$ at $x_0$ is the identity map on $T_{x_0}X$. 
	\end{itemize}
	In fact, (A3) follows from (A2) and (B2). 
	Since the action of $G$ on $X$ is locally free, the isotropy subgroup $G_{x_0}$ is finite. Set $U^{(4)} = \bigcap _{g \in G_{x_0}} gU^{(3)}$. Clearly, $U^{(4)}$ is an invariant open neighborhood at $x_0$. Let $\phi^{(4)} \co U^{(4)} \to T_{x_0}X = (T_{x_0}(G\cdot x_0))^\perp \oplus T_{x_0}(G\cdot x_0)$ be the map defined by 
	\begin{equation*}
		\phi^{(4)} := (\phi_1^{(4)},\phi_2^{(4)}) := \frac{1}{|G_{x_0}|}\sum_{g \in G_{x_0}} g\circ \phi^{(3)} \circ g^{-1}. 
	\end{equation*}
	The map $\phi^{(4)}$ is holomorphic and $G_{x_0}$-equivariant. It satisfies $\phi^{(4)}(x_0) = 0$. Moreover (B3) yields the following:
	\begin{itemize}
		\item[(B4)] The differential $(d\phi^{(4)})_{x_0}$ of $\phi^{(4)}$ at $x_0$ is the identity map on $T_{x_0}X$. 
	\end{itemize}
	
	\begin{cla}
		Let $x_1, x_2 \in U^{(4)}$. If $x_2 \in U^{(4)}$ sits in the path-connected component of $U^{(4)} \cap G \cdot x_{1}$, then $\phi_1^{(4)}(x_1) = \phi_1^{(4)}(x_2)$. 
	\end{cla}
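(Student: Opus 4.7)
The plan is to exploit property (A3) of the chart $\phi^{(3)}$, which states that $\phi_1^{(3)}$ is constant on each path-connected component of $U^{(3)} \cap G\cdot x$, together with the fact that the averaging procedure defining $\phi^{(4)}$ respects the orthogonal decomposition.

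First, I would note that by the very definition $U^{(4)} = \bigcap_{g \in G_{x_0}} g U^{(3)}$, we have $g^{-1} U^{(4)} \subset U^{(3)}$ for every $g \in G_{x_0}$. Then, given $x_1, x_2 \in U^{(4)}$ in the same path-connected component of $U^{(4)} \cap G \cdot x_1$, I would choose a continuous path $\gamma \co [0,1] \to U^{(4)} \cap G \cdot x_1$ joining $x_1$ to $x_2$. For each fixed $g \in G_{x_0}$, the translated path $g^{-1}\circ \gamma$ lies in $g^{-1} U^{(4)} \cap G \cdot (g^{-1} x_1) \subset U^{(3)} \cap G \cdot (g^{-1} x_1)$, and connects $g^{-1} x_1$ to $g^{-1} x_2$. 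Property (A3) applied to $U^{(3)}$ then yields $\phi_1^{(3)}(g^{-1} x_1) = \phi_1^{(3)}(g^{-1} x_2)$ for every $g \in G_{x_0}$.

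Next, I would observe that since the inner product on $T_{x_0} X$ used in the construction is $G_{x_0}$-invariant, the decomposition $T_{x_0}X = (T_{x_0}(G\cdot x_0))^\perp \oplus T_{x_0}(G\cdot x_0)$ is a decomposition of $G_{x_0}$-representations. Hence, for any $y \in U^{(4)}$, the expression $g \cdot \phi^{(3)}(g^{-1} y)$ splits componentwise as $(g \cdot \phi_1^{(3)}(g^{-1} y),\, g \cdot \phi_2^{(3)}(g^{-1} y))$, where $g \cdot \phi_1^{(3)}(g^{-1} y) \in (T_{x_0}(G\cdot x_0))^\perp$. Consequently,
\begin{equation*}
\phi_1^{(4)}(y) = \frac{1}{|G_{x_0}|} \sum_{g \in G_{x_0}} g \cdot \phi_1^{(3)}(g^{-1} y).
\end{equation*}
Applying this identity to $y = x_1$ and $y = x_2$ and using the termwise equality from the previous step yields $\phi_1^{(4)}(x_1) = \phi_1^{(4)}(x_2)$, as desired.

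The only mildly delicate point is the bookkeeping in the second paragraph: one must verify that the sum defining $\phi^{(4)}$ decomposes componentwise along the $G_{x_0}$-equivariant splitting, which is what allows the vanishing on each $g$-summand to transfer from $\phi_1^{(3)}$ to $\phi_1^{(4)}$. Everything else reduces to the observation that left translation by $g^{-1}$ carries path components of $U^{(4)} \cap G\cdot x_1$ into path components of $U^{(3)} \cap G\cdot (g^{-1} x_1)$.
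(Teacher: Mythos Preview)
Your proof is correct and follows essentially the same approach as the paper: both arguments translate the path by $g^{-1}$ into $U^{(3)}$, invoke (A3) to obtain $\phi_1^{(3)}(g^{-1}x_1)=\phi_1^{(3)}(g^{-1}x_2)$ for each $g\in G_{x_0}$, and then use the $G_{x_0}$-equivariance of the splitting $T_{x_0}X=(T_{x_0}(G\cdot x_0))^\perp\oplus T_{x_0}(G\cdot x_0)$ to conclude that the first component of the averaged map $\phi^{(4)}$ agrees at $x_1$ and $x_2$. The only difference is the order of presentation---the paper first expands the sum and then proves the termwise equality, whereas you do the reverse---but the content is identical.
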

	\begin{proof}
		Let $i =1,2$. By definition of $\phi^{(3)}$, we have 
		\begin{equation*}
			\begin{split}
				\sum g\circ \phi^{(3)}\circ g^{-1}(x_i) &= \sum g\cdot \phi^{(3)}(g^{-1}\cdot x_i)\\
				&= \sum g\circ (\phi_1^{(3)} (g^{-1}\cdot x_i), \phi_2^{(3)} (g^{-1}\cdot x_i)) \\
				&= (\sum g\cdot \phi_1^{(3)}(g^{-1}\cdot x_i), \sum g\cdot \phi_2^{(3)}(g^{-1}\cdot x_i)). 
			\end{split}
		\end{equation*}
		Thus it suffices to show that $\phi_1^{(3)}(g^{-1}\cdot x_1) =  \phi_1^{(3)}(g^{-1}\cdot x_2)$ for all $g \in G_{x_0}$. Let $g \in G_{x_0}$. 
		By the assumption, there exists a path $x_t$, $t \in [1,2]$ such that $x_t \in U^{(4)} \cap G \cdot x$. Since $g^{-1}U^{(4)} \subset U^{(3)}$, the path $g^{-1}\cdot x_t$ belongs to $U^{(3)} \cap G \cdot (g^{-1}\cdot x_1)$. This together with (A3) yields that $\phi_1^{(3)}(g^{-1}\cdot x_1) =  \phi_1^{(3)}(g^{-1}\cdot x_2)$. This proves the claim.
	\end{proof}
	It follows from $(B4)$ and the implicit function theorem that there exists an open neighborhood $U^{(5)} \subset U^{(4)}$ at $x_0$ such that the map $\phi^{(4)}|_{U^{(5)}} \co U^{(5)} \to T_{x_0}X$ is a biholomorphic map onto its image. Set $U^{(6)} := \bigcap_{g \in G_{x_0}} U^{(5)}$. Then, the map $\phi^{(6)} := \phi^{(4)}|_{U^{(6)}} \co U^{(6)} \to T_{x_0}X$ is a $G_{x_0}$-equivariant biholomorphic map onto its image. Let $V^{(7)} \subset T_{x_0}X = (T_{x_0}(G\cdot x_0))^\perp$ be a connected $G_{x_0}$-invariant open neighborhood at $0$ and $W^{(7)} \subset  T_{x_0}(G\cdot x_0)$ be a connected $G_{x_0}$-invariant open neighborhood at $0$ such that $V^{(7)} \times W^{(7)} \subset \phi^{(6)}(U^{(6)})$. Finally, put $U^{(7)} := (\phi^{(6)})^{-1}(V^{(7)} \times W^{(7)})$ and $\phi^{(7)} := \phi^{(6)}|_{U^{(7)}} \co U^{(7)} \to V^{(7)} \times W^{(7)} \subset T_{x_0}X = (T_{x_0}(G\cdot x_0))^\perp \oplus T_{x_0}(G\cdot x_0)$. Then, $\phi^{(7)} \co U^{(7)} \to V^{(7)} \times W^{(7)} $ is a $G_{x_0}$-equivariant holomorphic chart such that
	\begin{itemize}
		\item[(A7)] For each $v \in V^{(7)}$, the set $\{ y \in U^{(7)} \mid \phi_{1}^{(7)}(y) = v\}$ is a path-connected component of $U^{(7)} \cap G\cdot x$ for some $x \in U^{(7)}$ and vice versa.
	\end{itemize}
	In fact, (A7) follows from the claim above, the connectedness of $W^{(7)}$ and the locally freeness of the action of $G$ on $X$. 

	We are now ready to construct a holomorphic slice $S$ through $x_0$. 
	For short, we denote by $\phi =(\phi_1,\phi_2) \co U \to V \times W$ instead of $\phi^{(7)} \co U^{(7)} \to V^{(7)} \times W^{(7)}$.  Define $S := \{ y \in U \mid \phi_2(y) = 0\} = \phi^{-1}(V \times \{0\})$ and define a map $F^S \co G \times S \to X$ by $F^S(g,s) = g\cdot s$ for $g \in G$ and $s \in S$. 
	Then $F^S$ is a holomorphic map and descends to the map $\widetilde{F^S} \co G \times_{G_{x_0}} S \to X$. By construction, $\widetilde{F^S}$ is $G$-equivariant and there exists a neighborhood $U_0 \subset G$  of the identity such that $\widetilde{F^S}|_{U_0 \times S}$ is a diffeomorphism onto its image.  Hence $\widetilde{F^S}$ is a covering map onto its image $GS = GU$. However the preimage of $x_0$ by $\widetilde{F}$ is the singleton $\{ [g, x_0] \mid g \in G_{x_0}\}$
	because $G\cdot x_0$ intersects with $S$ exactly one point $x_0$. Therefore $\widetilde{F^S}$ is a $G$-equivariant biholomorphic map onto its image $GU$.  Therefore, $S$ is a holomorphic slice through $x_0$.

	Consider all $x_0 \in X$ and all $G_{x_0}$-equivariant charts $\phi \co U \to V \times W$ as above. We obtain a holomorphic atlas $\{(U_\alpha, \phi_\alpha \co U_\alpha \to V_\alpha \times W_\alpha)\}_\alpha$ consisting of such charts. 
	Let $\pi \co X \to X/G$ be the natural projection. Since the action of $G$ on $X$ is proper, the quotient $X/G$ is Hausdorff. Since $\pi$ is open, we have that $X/G$ is second-countable and the collection $\{\pi(U_\alpha)\}_\alpha$ is an open base of $X/G$. Let $S_\alpha = \phi_\alpha^{-1} (V_\alpha \times \{0\})$ be the slice through the center $x_\alpha = \phi_{\alpha}^{-1}(0,0)$ of $\phi_\alpha \co U_\alpha \to V_\alpha \times W_\alpha$. Let $G_\alpha$ denote the isotropy subgroup at $x_\alpha$. Then, $\pi (U_\alpha)$ is homeomorphic to the quotient $S_\alpha/G_\alpha$ for each $\alpha$. We remark that $S_\alpha/G_\alpha$ is an analytic set because $G_\alpha$ is a finite group acting on $V_\alpha$ linearly. Let $\widetilde{\varphi}_\alpha \co V_\alpha \to \pi (U_\alpha)$ be the map given by $\widetilde{\varphi}_\alpha (v_\alpha) = \pi (\phi_\alpha^{-1}(v_\alpha,0))$ for $v_\alpha \in V_\alpha$. 
	Then $(\pi(U_\alpha), G_\alpha, \widetilde{\varphi}_\alpha \co V_\alpha \to \pi (U_\alpha))$ is an orbifold chart about $\pi (x_\alpha)$. The collection $\{(\pi (U_\alpha), G_\alpha, \widetilde{\varphi}_\alpha \co V_\alpha \to \pi (U_\alpha))\}_\alpha$ of all such orbifold charts is a holomorphic orbifold atlas on $X/G$, and $\{\pi (U_\alpha)\}_\alpha$ is an open base of $X/G$. Therefore the structure sheaf $\mathcal{O}_{X/G}$ of $X/G$ is determined by 
	\begin{equation*}
		\mathcal{O}_{X/G} (\pi (U_\alpha)) = \{ f \co \pi (U_\alpha) \to \C \mid \text{$\widetilde{\varphi}_\alpha^*f \co V_\alpha \to \C$ is holomorphic}\}. 
	\end{equation*}
	
	Let $\mathcal{O}_X$ be the structure sheaf of $X$. Let $Y$ be an analytic space and $\mathcal{O}_Y$ its structure sheaf. Let $F \co X \to Y$ be a $G$-invariant morphism as analytic spaces. Let $\widetilde{F} \co X/G \to Y$ be the map induced by $F$. Since $F$ is continuous, so is $\widetilde{F}$. We claim that $\widetilde{F}$ is a morphism of complex analytic spaces. Let $U_Y$ be an open subset of $Y$ and $g \in \mathcal{O}_Y(U_Y)$. For any chart $(U_\alpha, \widetilde{\phi}_\alpha = (\widetilde{\phi}_{\alpha1}, \widetilde{\phi}_{\alpha2}) \co U_\alpha \to V_\alpha \times W_\alpha)$ such that $F^{-1} (U_Y) \supset U_\alpha$, we have $(F^*g)|_{U_\alpha} \in \mathcal{O}_X (U_\alpha)$.  Since $F^*g$ is invariant under the action of $G$, there exists $h \in \mathcal{O}(V_\alpha)^{G_\alpha}$ such that $\widetilde{\phi}_{\alpha1}^* h = (F^*g)|_{U_\alpha}$, where $\mathcal{O}(V_\alpha)$ is the ring of holomorphic functions on $V_\alpha$. By pushing forward $h$ by $\widetilde{\varphi}_\alpha \co V_\alpha \to \pi(U_\alpha)$, we find $h' \in \mathcal{O}_{X/G}(\pi(U_\alpha))$ such that $\widetilde{\varphi}_\alpha^* h' = h$. Since $\widetilde{\varphi}_\alpha \circ \widetilde{\phi}_{\alpha_1} = \pi|_{U_\alpha}$, we have 
	\begin{equation*}
		\begin{split}
			(\pi|_{U_\alpha})^* h' &= \widetilde{\phi}_{\alpha1}^* \widetilde{\varphi}_\alpha^* h' \\
			&= \widetilde{\phi}_{\alpha1}^*h \\
			&= (F^*g)|_{U_\alpha}\\
			&= (\pi|_{U_\alpha})^*\widetilde{F}^*g. 
		\end{split}
	\end{equation*}
	Since $\pi|_{U_\alpha}$ is surjective, it follows that $(\pi|_{U_\alpha})^* h' = (\pi|_{U_\alpha})^*\widetilde{F}^*g$ implies $h' = (\widetilde{F}^*g)|_{\pi(U_\alpha)}$. In particular, $(\widetilde{F}^*g)|_{\pi(U_\alpha)} \in \mathcal{O}_{X/G}(\pi(U_\alpha))$. Therefore $\widetilde{F}^*g \in \mathcal{O}_{X/G}(\widetilde{F}^{-1}(U_Y))$. 
	Therefore, $\widetilde{F}$ is a morphism of complex analytic spaces, establishing  the desired universal property.

\section*{Declarations}
\subsection*{Funding}
The first author is supported by JSPS KAKENHI Grant Number JP23K03120 and JP24K00524. The second author is supported by JSPS KAKENHI Grant Number JP24K06742. The third author is supported by JSPS KAKENHI Grant Number JP24K00524.
\subsection*{Competing interests}
The authors have no competing interests to declare that are relevant to the content of this article.
\subsection*{Data Availability statement}
Data sharing not applicable to this article as no datasets were generated or analysed during the current study.
	
\begin{bibdiv}
\begin{biblist}

\bib{Anisimov2012-1}{article}{
   author={Anisimov, Artem},
   title={Spherical subgroups and double coset varieties},
   journal={J. Lie Theory},
   volume={22},
   date={2012},
   number={2},
   pages={505--522},
   issn={0949-5932},
   review={\MR{2976931}},
}

\bib{Anisimov2012-2}{article}{
   author={Anisimov, Artem},
   title={On existence of double coset varieties},
   journal={Colloq. Math.},
   volume={126},
   date={2012},
   number={2},
   pages={177--185},
   issn={0010-1354},
   review={\MR{2924248}},
   doi={10.4064/cm126-2-3},
}

\bib{AH2009}{article}{
   author={Arzhantsev, Ivan V.},
   author={Hausen, J\"urgen},
   title={Geometric invariant theory via Cox rings},
   journal={J. Pure Appl. Algebra},
   volume={213},
   date={2009},
   number={1},
   pages={154--172},
   issn={0022-4049},
   review={\MR{2462993}},
   doi={10.1016/j.jpaa.2008.06.005},
}
\bib{ADJ2013}{article}{
   author={Arzhantsev, Ivan V.},
   author={Celik, Devrim},
   author={Hausen, J\"urgen},
   title={Factorial algebraic group actions and categorical quotients},
   journal={J. Algebra},
   volume={387},
   date={2013},
   pages={87--98},
   issn={0021-8693},
   review={\MR{3056687}},
   doi={10.1016/j.jalgebra.2013.04.018},
}

\bib{ADHL}{book}{
    AUTHOR = {Arzhantsev, Ivan},
    AUTHOR = {Derenthal, Ulrich},
    AUTHOR = {Hausen, J\"{u}rgen},
    AUTHOR = {Laface, Antonio},
     TITLE = {Cox rings},
    SERIES = {Cambridge Studies in Advanced Mathematics},
    VOLUME = {144},
 PUBLISHER = {Cambridge University Press, Cambridge},
      YEAR = {2015},
     PAGES = {viii+530},
      ISBN = {978-1-107-02462-5},
   MRCLASS = {14Cxx (14Jxx 14Lxx)},
  review = {\MR{3307753}},
MRREVIEWER = {Alexandr V. Pukhlikov},
}

\bib{Drezet2004}{article}{
   author={Dr\'{e}zet, Jean-Marc},
   title={Luna's slice theorem and applications},
   conference={
      title={Algebraic group actions and quotients},
   },
   book={
      publisher={Hindawi Publ. Corp., Cairo},
   },
   isbn={977-5945-12-7},
   date={2004},
   pages={39--89},
   review={\MR{2210794}},
}

\bib{GKZ2020}{article}{
   author={Goertsches, Oliver},
   author={Konstantis, Panagiotis},
   author={Zoller, Leopold},
   title={Symplectic and K\"{a}hler structures on biquotients},
   journal={J. Symplectic Geom.},
   volume={18},
   date={2020},
   number={3},
   pages={791--813},
   issn={1527-5256},
   review={\MR{4142487}},
   doi={10.4310/JSG.2020.v18.n3.a6},
}

\bib{HMP1998}{article}{
   author={Heinzner, Peter},
   author={Migliorini, Luca},
   author={Polito, Marzia},
   title={Semistable quotients},
   journal={Ann. Scuola Norm. Sup. Pisa Cl. Sci. (4)},
   volume={26},
   date={1998},
   number={2},
   pages={233--248},
   issn={0391-173X},
   review={\MR{1631577}},
}

\bib{IK2020}{article}{
   author={Ishida, Hiroaki},
   author={Kasuya, Hisashi},
   title={Non-invariant deformations of left-invariant complex structures on compact Lie groups.},
   journal={ Forum Math. },
   volume={34},
   date={2022},
   number={4},
   pages={907--911},
   review={\MR{4445553 }},
   doi={10.1515/forum-2021-0133},
}

\bib{IK2025}{article}{
  author={Ishida, Hiroaki},
  author={Kasuya, Hisashi},
  title={{D}ouble sided torus actions and complex geometry on $SU (3)$},
  journal={J. Symplectic Geom.},
  volume={23}, 
  number={4}, 
  pages={751--777}, 
  year={2025}
}

\bib{Haefliger1985}{article}{
   author={Haefliger, A.},
   title={Deformations of transversely holomorphic flows on spheres and
   deformations of Hopf manifolds},
   journal={Compositio Math.},
   volume={55},
   date={1985},
   number={2},
   pages={241--251},
   issn={0010-437X},
   review={\MR{795716}},
}

\bib{King94}{article}{
    AUTHOR = {King, A. D.},
     TITLE = {Moduli of representations of finite-dimensional algebras},
   JOURNAL = {Quart. J. Math. Oxford Ser. (2)},
  FJOURNAL = {The Quarterly Journal of Mathematics. Oxford. Second Series},
    VOLUME = {45},
      YEAR = {1994},
    NUMBER = {180},
     PAGES = {515--530},
      ISSN = {0033-5606},
   MRCLASS = {16G10 (14D25)},
   review = {\MR{1315461}},
       DOI = {10.1093/qmath/45.4.515},
       URL = {https://doi.org/10.1093/qmath/45.4.515},
}

\bib{LMN2007}{article}{
   author={Loeb, Jean-Jacques},
   author={Manjar\'{\i}n, M\`onica},
   author={Nicolau, Marcel},
   title={Complex and CR-structures on compact Lie groups associated to
   abelian actions},
   journal={Ann. Global Anal. Geom.},
   volume={32},
   date={2007},
   number={4},
   pages={361--378},
   issn={0232-704X},
   review={\MR{2346223}},
   doi={10.1007/s10455-007-9067-7},
}

\bib{LV1997}{article}{
 author={L\'opez de Medrano, Santiago},
   author={Verjovsky, Alberto},
   title={A new family of complex, compact, non-symplectic manifolds},
   journal={Bol. Soc. Brasil. Mat. (N.S.)},
   volume={28},
   date={1997},
   pages={253--269},
   issn={},
   review={\MR{1479504}},
   doi={},
}

\bib{Luna}{article}{
   author={Luna, Domingo},
   title={Slices \'{e}tales},
   language={French},
   conference={
      title={Sur les groupes alg\'{e}briques},
   },
   book={
      series={Suppl\'{e}ment au Bull. Soc. Math. France, Tome 101},
      publisher={Soc. Math. France, Paris},
   },
   date={1973},
   pages={81--105},
   review={\MR{342523}},
   doi={10.24033/msmf.110},
}

\bib{Meersseman2000}{article}{
   author={Meersseman, Laurent}
   title={ A new geometric construction of compact complex manifolds in any dimension},
   journal={Math. Ann.},
   volume={317(1)},
   date={2000},
   pages={79--115},
   issn={},
   review={\MR{1760670}},
   doi={10.1007/s002080050360},
}

\bib{MV2004}{article}{
   author={Meersseman, Laurent},
   author={Verjovsky, Alberto},
   title={Holomorphic principal bundles over projective toric varieties},
   journal={J. Reine Angew. Math.},
   volume={572},
   date={2004},
   pages={57--96},
   issn={0075-4102},
   review={\MR{2076120}},
   doi={10.1515/crll.2004.054},
}

\bib{MFK}{book}{
    AUTHOR = {Mumford, D.},
    author = {Fogarty, J.},
    author = {Kirwan, F.},
     TITLE = {Geometric invariant theory},
    SERIES = {Ergebnisse der Mathematik und ihrer Grenzgebiete (2) [Results
              in Mathematics and Related Areas (2)]},
    VOLUME = {34},
   EDITION = {Third},
 PUBLISHER = {Springer-Verlag, Berlin},
      YEAR = {1994},
     PAGES = {xiv+292},
      ISBN = {3-540-56963-4},
   MRCLASS = {14D25 (58E05 58F05)},
  MRNUMBER = {\MR{1304906}},
MRREVIEWER = {Yi Hu},
}

\bib{Nakajima99}{book}{
    AUTHOR = {Nakajima, Hiraku},
     TITLE = {Lectures on {H}ilbert schemes of points on surfaces},
    SERIES = {University Lecture Series},
    VOLUME = {18},
 PUBLISHER = {American Mathematical Society, Providence, RI},
      YEAR = {1999},
     PAGES = {xii+132},
      ISBN = {0-8218-1956-9},
   MRCLASS = {14C05 (17B69)},
  review = {\MR{1711344}},
MRREVIEWER = {Mark Andrea A. de Cataldo},
       DOI = {10.1090/ulect/018},
       URL = {https://doi.org/10.1090/ulect/018},
}

\bib{Newstead78}{book}{
    AUTHOR = {Newstead, P. E.},
     TITLE = {Introduction to moduli problems and orbit spaces},
    SERIES = {Tata Institute of Fundamental Research Lectures on Mathematics
              and Physics},
    VOLUME = {51},
 PUBLISHER = {Tata Institute of Fundamental Research, Bombay; Narosa
              Publishing House, New Delhi},
      YEAR = {1978},
     PAGES = {vi+183},
      ISBN = {0-387-08851-2},
   MRCLASS = {14-02 (14D20)},
  review = {\MR{546290}},
MRREVIEWER = {G. Horrocks},
}

\bib{Nirenberg58}{article}{
	author={Nirenberg, Louis},
	review={Zbl 0099.37502},
	title={A complex Frobenius theorem},
	date={1958},
}

\bib{Palais61}{article}{
   author={Palais, Richard S.},
   title={On the existence of slices for actions of non-compact Lie groups},
   journal={Ann. of Math. (2)},
   volume={73},
   date={1961},
   pages={295--323},
   issn={0003-486X},
   review={\MR{126506}},
   doi={10.2307/1970335},
}

\bib{Simpson1994}{article}{
   author={Simpson, Carlos T.},
   title={Moduli of representations of the fundamental group of a smooth
   projective variety. I},
   journal={Inst. Hautes \'Etudes Sci. Publ. Math.},
   number={79},
   date={1994},
   pages={47--129},
   issn={0073-8301},
   review={\MR{1307297}},
}

\end{biblist}
\end{bibdiv}
\end{document}